\documentclass{amsart}
\usepackage{amsmath,amssymb,hyperref,mathrsfs,graphicx}
\usepackage[utf8x]{inputenc}
\usepackage{amsmath}
\usepackage{amsfonts}
\usepackage{latexsym}
\usepackage{amsthm}
\usepackage{amssymb,amscd}
\usepackage{xargs}
\usepackage{tikz}
\usepackage{stmaryrd}
\usepackage{pgf,tikz}
\usepackage{fancybox}
\usepackage{graphicx}
\usepackage{stmaryrd}
\usepackage{color}
\usepackage{enumitem}

\usepackage[colorinlistoftodos]{todonotes}
 \presetkeys{todonotes}%
{inline,backgroundcolor=gray!20,bordercolor=gray!30}{}
\tikzset{/tikz/notestyleraw/.append style={text=black}}

\oddsidemargin=3mm \evensidemargin=3mm 
\topmargin=-5mm \textheight=225mm 
\textwidth=145mm

\newtheorem{thm}{Theorem}[section]

\newtheorem{lem}[thm]{Lemma}
\newtheorem{defn}[thm]{Definition}
\newtheorem{prop}[thm]{Proposition}
\newtheorem{cor}[thm]{Corollary}

\newtheorem{ex}[thm]{Example}
\newtheorem{rmk}[thm]{Remark}
\newcommand{\be}{\begin{eqnarray}}
\newcommand{\ee}{\end{eqnarray}}
\newcommand{\ben}{\begin{eqnarray*}}
\newcommand{\een}{\end{eqnarray*}}
\newcommand{\beal}{\begin{aligned}}
\newcommand{\enal}{\end{aligned}}
\newcommand{\beq}{\begin{equation}}
\newcommand{\eeq}{\end{equation}}

\newcommand{\lb}{\lambda}

\newcommand{\R}{\mathbb{R}}

\newcommand{\N}{\mathbb{N}}

\newcommand{\Z}{\mathbb{Z}}
\newcommand{\Lb}{\Lambda}
\newcommand{\om}{\omega}
\newcommand{\Om}{\Omega}
\newcommand{\dt}{\delta}

\newcommand{\cC}{\mathcal{C}}
\newcommand{\cP}{\mathcal{P}}

\newcommand{\cM}{\mathcal{M}}

\newcommand{\cL}{\mathcal{L}}

\newcommand{\wt}{\widetilde }

\title{Generalized comparison principle for contact Hamilton-Jacobi equations}
%\author{Jianlu Zhang$^\dagger$}
%\address{$\dagger$ State Key Laboratory of Mathematical Sciences \& Academy of Mathematics and systems science\\Chinese Academy of Sciences, Beijing 100190, China}
%\email{jellychung1987@gmail.com}
\thanks{$\dagger$ {\it Statements and Declarations: }The authors declare no competing interests.}
\subjclass[2010]{35B40, 35B51, 35F21, 37J50, 37J55, 49L25}
\keywords{viscosity solution, contact Hamilton-Jacobi equation,  Mather measure, weak KAM theory}
%\date{\today}
\begin{document}
\maketitle

\centerline{Gengyu Liu$^*$,\quad Jianlu Zhang$\dagger$}
\medskip
{\footnotesize
\centerline{State Key Laborotary of Mathematical Sciences}
 \centerline{Academy of Mathematics and Systems Science}
 \centerline{Chinese Academy of Sciences, Beijing 100190, China}
  \centerline{{\it Email: }liugengyu@amss.ac.cn$^*$,\quad  jellychung1987@gmail.com$\dagger$}  
}

%\centerline{\scshape Jianlu Zhang$^\dagger$}
%\medskip
%{\footnotesize
%% please put the address of the first author
%\centerline{Hua Loo-Keng Key Laboratory of Mathematics \&}
% \centerline{Mathematics Institute, Academy of Mathematics and systems science}
% \centerline{Chinese Academy of Sciences, Beijing 100190, China}
%   \centerline{{\it Email: }jellychung1987@gmail.com}  
%}

\begin{abstract}
In this paper, we discuss all the possible pairs $(u,c)\in C(M,\R)\times\R$ solving (in the sense of viscosity) the contact Hamilton-Jacobi equation 
\[
H (x, d_xu, u) = c,\quad x\in M
\]
of which $M$ is a closed manifold and the continuous Hamiltonian $H: (x,p,u)\in T^*M\times\R\rightarrow\R$ is convex, coercive in $p$ but merely non-decreasing in $u$. Firstly, we propose a comparison principle for solutions by using the dynamical information of Mather measures. We then describe the structure of $\mathfrak C$ containing all the $c\in\R$ makes previous equation solvable. We also propose examples to verify the optimality of our approach.  
%As applications, we also discuss the local stability of the viscosity solutions with respect to the contact Hamiltonian flow, the criterion for the uniqueness of viscosity solutions and the asymptotic behavior of solutions in the vanishing discount problem.
\end{abstract}

\section{Introduction}\label{s1}

Suppose $M$ is a $n-$dimensional closed manifold equipped with a Riemannian metric $|\cdot|_{x\in M}$. For any continuous {\it contact Hamiltonian} $H:(x,p,u)\in T^* M\times\R\rightarrow\R$
satisfying the following assumptions:\medskip
 \begin{description}
	\item[H1]  $H$ is convex in $p\in T_x^*M$ for any $(x,u)\in M\times\R$;
	\item[H2] $H$ is superlinear in each fiber, i.e. $\lim_{|p|\rightarrow+\infty}{H (  x , p , u )}/{|p|_x} =+\infty$ for any $(x,u)\in M\times\R$;
	\item[H3]  $H(x,p,u)$ is non-decreasing in $u$ for any $(x,p)\in T^*M$; 
%	\item[H4]  $\partial_uH(x,p,u)$ exists and continuous for any $(x,p,u)\in T^*M\times\R$; 
%	\item[H4'] $\partial_uH(x,p,u)$ exists and continuous for any layer $(x,p,u)\in T^*M\times\{\theta\}$.
%	\item[H5]  $H(x,p,u)$ is convex in $u$ for any $(x,p)\in T^*M$; 
			\end{description}
The following {\it  contact Hamilton-Jacobi (H-J) equation} 
\beq\label{eq:hjc}
H(x, d_x u,u)=c,\quad x\in M
\eeq
will be studied. We try to seek all the possible pairs $(u,c)\in C(M,\R)\times\R$ solving \eqref{eq:hjc} in the sense of viscosity. Precisely,  equation \eqref{eq:hjc} is solvable only for $c\in\mathfrak C$, where the {\it admissible set} $\mathfrak C\subset\R$ is a non-empty, connected interval (see Proposition \ref{prop:mane} for the proof). Another feature of \eqref{eq:hjc} is that, for a fixed $c\in \R$ the equation \eqref{eq:hjc} may have different solutions (even by adding additive constant). It is therefore meaningful to understand why the multiplicity happens and how it relates with the structure of $\mathfrak C$.

In this paper, we propose a {\it comparison principle} to explain the multiplicity of solutions. 
We will show how certain {\it Mather measures} dominate the order of solutions and the structure of $\mathfrak C$. Recall that for Hamiltonians independent of $u$, the significance of Mather measures in dominating the solutions have been discussed in \cite{F,FS}, as a part of the weak KAM theory. Besides, for Hamiltonians strictly increasing in $u$, the solution to \eqref{eq:hjc} is unique, which was proved earlier, see \cite{CL,CEL,CIL} for instance. These works prompt us to study the essential role of 
{\bf H3} in ordering the solutions of \eqref{eq:hjc}. Besides, examples are constructed to verify the optimality of our assumptions.

Finally, we point out that a similar comparison principle was made in an earlier work \cite{JMT}, where the authors used a {\it nonlinear adjoint method} developed in \cite{MT1,MT2}. In some sense, their work inspires the comparison principle part of this paper, although our treatment uses a different technique (from the weak KAM theory) which relies on weaker assumptions. At the same time, a description of $\mathfrak C$ was also made in \cite{WY} by using a min-max formula, which doesn't involve the information of solutions. \medskip

Throughout the paper, solution (resp. subsolution, supersolution) is always meant by the viscosity solution (resp. viscosity subsolution, viscosity supersolution). 

\subsection{Main Results}

For any $\theta\in\R$ and Hamiltonian $H$ satisfying {\bf H1-H3}, there exists a unique  {\it ergodic constant} $c(\theta)\in\R$ defined by 
\beq\label{eq:e-const}
c(\theta):=\inf\{c\in\R: H(x,d_x\om,\theta)\leq c \text{ solvable in the sense of viscosity on $M$}\},
\eeq
see e.g. \cite{Tr}. We can state a standard result in light of the classical Perron method:
\begin{prop}[ergodic constant]\label{prop:mane}
Assume {\bf H1-H3}, then %For any $a\in\R$, there exists a unique $c(a)\in\R$ presenting as the ergodic constant of the Hamiltonian $H(\cdot,\cdot,a)$. Moreover, 
 % following properties of $c(\theta)$ can be drawn:
\begin{itemize}
\item[1)] $c:\R\rightarrow\R$ is continuous and non-decreasing;
\item[2)] Denote by $\mathfrak C:=\{c(a)|a\in\R\}$, then for any $c\in\mathfrak C$, the associated equation \eqref{eq:hjc} admits a solution. 
%\item[(iii)] If {\bf H5} is assumed, then $c:\R\rightarrow\R$ is convex.
\end{itemize}
\end{prop}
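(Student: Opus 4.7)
The plan is to establish the two claims in sequence: first show $\theta\mapsto c(\theta)$ is non-decreasing and continuous using only the definition \eqref{eq:e-const} and {\bf H1}--{\bf H3}, and then exhibit, for each $c\in\mathfrak{C}$, a solution of \eqref{eq:hjc} via a Perron-type construction based on a shifted critical solution at a frozen $u$-level. Monotonicity is immediate from {\bf H3}: if $\theta_1\leq\theta_2$ and $\omega$ is a viscosity subsolution of $H(x,d_x\omega,\theta_2)\leq c$, then $H(x,d_x\omega,\theta_1)\leq H(x,d_x\omega,\theta_2)\leq c$ in the same viscosity sense, whence $c(\theta_1)\leq c(\theta_2)$.

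For continuity at $\theta_0\in\R$, I would fix $\eps>0$ and invoke the classical Perron method on the $u$-independent equation $H(x,d_x\omega,\theta_0)=c(\theta_0)$ to obtain a Lipschitz viscosity solution $\omega_{\theta_0}$. By {\bf H2} and continuity of $H$, subsolutions admit a Lipschitz bound that is uniform over $u$ in a compact neighbourhood of $\theta_0$ (using compactness of $M\times[\theta_0-1,\theta_0+1]$). Consequently the $1$-jet of $\omega_{\theta_0}$ sits in a fixed compact subset of $T^*M\times\R$ on which $H$ is uniformly continuous, and for $|\theta-\theta_0|$ small this yields
\[
H(x,d_x\omega_{\theta_0},\theta)\leq H(x,d_x\omega_{\theta_0},\theta_0)+\eps\leq c(\theta_0)+\eps,
\]
so $c(\theta)\leq c(\theta_0)+\eps$. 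The reverse inequality follows symmetrically by replacing $\omega_{\theta_0}$ with an $\eps$-approximate subsolution at the level $\theta$.

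For the existence claim, fix $c(\theta)\in\mathfrak{C}$ and let $\omega_\theta$ be the critical solution of $H(x,d_x\omega_\theta,\theta)=c(\theta)$ obtained above. Choose $K>0$ large enough that $\omega_\theta(x)-K\leq\theta\leq\omega_\theta(x)+K$ on $M$ and set $v_\pm:=\omega_\theta\pm K$. Since $d_xv_\pm=d_x\omega_\theta$ in the viscosity sense (test functions touching $v_\pm$ differ from those touching $\omega_\theta$ only by additive constants) and $H$ is non-decreasing in $u$,
\[
H(x,d_xv_-,v_-)\leq H(x,d_x\omega_\theta,\theta)=c(\theta)\leq H(x,d_xv_+,v_+),
\]
so $v_-$ is a viscosity subsolution and $v_+$ a viscosity supersolution of \eqref{eq:hjc} at the level $c(\theta)$. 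Perron's method applied to the continuous proper nonlinearity $F(x,p,u):=H(x,p,u)-c(\theta)$ on the bracket $v_-\leq\cdot\leq v_+$ then produces a continuous viscosity solution.

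The main obstacle I expect is the continuity of $c(\theta)$: since {\bf H2} is only pointwise superlinear in $p$, one must upgrade it to a Lipschitz bound on subsolutions that is uniform in $u$ on a neighbourhood of $\theta_0$, and this is the only place in the argument where continuity of $H$ in $u$ (rather than merely monotonicity) is essential. Everything else reduces to standard Perron arguments in the contact setting, which are valid under {\bf H1}--{\bf H3} without appealing to any comparison or uniqueness theorem.
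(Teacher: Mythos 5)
Your argument matches the paper's proof in both structure and substance: monotonicity is the same direct application of {\bf H3}, the existence claim uses the identical device of vertically shifting a frozen-level critical solution to manufacture an ordered subsolution/supersolution pair and then invoking Perron, and both continuity proofs hinge on the locally-uniform Lipschitz bound (in the frozen parameter $\theta$) that follows from {\bf H1}, {\bf H2} and continuity of $H$ on compact sets. The only cosmetic difference is that you run a direct $\eps$-estimate using uniform continuity of $H$ on a compact jet set, whereas the paper extracts a convergent subsequence of normalized solutions and cites the uniqueness of the ergodic constant; both are valid and rest on the same Lipschitz compactness.
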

%Unless $\mathfrak C$ is singleton, 
For any $c\in\mathfrak C$
%Due to Proposition \ref{prop:mane}, $c:\theta\in\R\rightarrow\R$ is continuous and non-decreasing, and the admissible set $\mathfrak C$ actually equals $\{c(\theta)\in\R|\theta\in\R\}$. For any $c\in\mathfrak C$ fixed, 
there always exists a single interval $ I(c)\subset\R$ such that $c(\theta)=c$ for any $\theta\in I(c)$. To state our comparison principle, we also need the following assumptions:

 \begin{description}
%	\item[H1]  $H(x,\cdot,u)$ is convex in $p\in M$ for any $(x,u)\in M\times\R$;
%	\item[H2] For any $(x,u)\in M\times\R$, there holds
%	$\lim_{|p|\rightarrow+\infty}{H (  x , p , u )}/{|p|_x} =+\infty$;
%	\item[H3]  $H(x,p,u)$ is non-decreasing in $u$ for any $(x,p)\in T^*M$; 
	\item[H4]  $\partial_uH(x,p,u)$ exists and is continuous for any $(x,p,u)\in T^*M\times\R$; 
	\item[H4'] $\partial_uH(x,p,\theta)$ exists and is continuous for some $\theta\in I(c)$;
	\item[H5]  $H(x,p,u)$ is convex in $u$ for any $(x,p)\in T^*M$.
			\end{description}

%With such a preparation, we now state the main conlusions:
\begin{thm}[Local]\label{thm:l-cp}
Assume {\bf H1-H4}. 
\begin{itemize}
\item[1)]If $u_1,u_2$ are two solutions of \eqref{eq:hjc} which satisfy 
\beq\label{eq:cp-mea-2}
\int_{TM} u_1(x) d\mu(x,v)\leq \int_{TM} u_2(x) d\mu(x,v),\quad \forall\; \mu\in\mathfrak M_-(u_1),
\eeq
then $u_1\leq u_2$. The set $\mathfrak M_-(u_1)$ which contains {\bf the ordinal Mather measures} associated with $u_1$ is defined in \eqref{eq:u-tint} of Definition \ref{defn:t-mea}.
\item[2)] If $\mathfrak M_-(u)=\emptyset$ for some  solution $u$ of \eqref{eq:hjc}, then $u$ has to be the unique solution of \eqref{eq:hjc}. Consequently, there exists at most one solution $u$ of \eqref{eq:hjc} such that $\mathfrak M_-(u)=\emptyset$.
\end{itemize}
\end{thm}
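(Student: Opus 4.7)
The plan is to prove (1) by contradiction using a pair of calibration inequalities, and then to derive (2) by a vacuous application of (1).

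For (1), assume $m := \max_{M}(u_1-u_2) > 0$ is attained at some $x^* \in M$. Since $u_2$ is a viscosity solution on the closed manifold $M$ and H2 holds, a backward $u_2$-calibrated curve $\gamma:(-\infty,0]\to M$ with $\gamma(0) = x^*$ exists and satisfies
\[
u_2(\gamma(0)) - u_2(\gamma(t)) = \int_t^0 L\bigl(\gamma(s),\dot\gamma(s),u_2(\gamma(s))\bigr)\,ds - ct,\qquad t\le 0,
\]
where $L$ is the Fenchel dual of $H$ in $p$. Along the same $\gamma$, the subsolution property of $u_1$ gives
\[
u_1(\gamma(0)) - u_1(\gamma(t)) \le \int_t^0 L\bigl(\gamma(s),\dot\gamma(s),u_1(\gamma(s))\bigr)\,ds - ct.
\]
Subtracting, and using that H3 is equivalent via Legendre duality to $L$ being non-increasing in its third slot, one sees that on any sub-interval of $[t,0]$ on which $\phi := u_1-u_2$ is non-negative along $\gamma$ the integrand $L(u_1) - L(u_2)$ is non-positive, hence $m - \phi(\gamma(t)) \le 0$. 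A continuity/propagation argument starting from $\phi(\gamma(0)) = m$ upgrades this local fact to $\phi \equiv m$ on the entire backward orbit.

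With $\phi \equiv m$ on $\gamma$ the two inequalities above must actually be equalities, so $L(u_1) \equiv L(u_2)$ a.e.\ along $\gamma$ and the curve is \emph{also} $u_1$-calibrated. Form the empirical measures $\mu_T := T^{-1}\int_{-T}^0 \delta_{(\gamma(s),\dot\gamma(s))}\,ds$ on $TM$: uniform boundedness of $\dot\gamma$ (coercivity H2 together with Lipschitz bounds on $u_2$) yields tightness, and Prokhorov provides a weak-$\ast$ subsequential limit $\mu$. By the standard weak KAM recipe, $\mu$ is an invariant probability measure for the Euler-Lagrange flow associated to $L(\cdot,\cdot,u_1(\cdot))$, and the $u_1$-calibration of $\gamma$ places $\mu$ in $\mathfrak M_-(u_1)$ in the sense of Definition \ref{defn:t-mea}. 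But $\phi \equiv m$ on $\mathrm{supp}\,\mu$ forces $\int_{TM}\phi\,d\mu = m > 0$, contradicting the hypothesis $\int u_1\,d\mu \le \int u_2\,d\mu$. This proves (1).

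For (2), emptiness of $\mathfrak M_-(u)$ makes the hypothesis of (1) vacuously true with $u_1 = u$, yielding $u \le u'$ for every other solution $u'$. For the reverse direction, if $u'(y^*) > u(y^*)$ anywhere, running the same construction with the roles of $u$ and $u'$ interchanged — a backward $u$-calibrated curve from $y^*$ paired with $u'$ as the subsolution — produces by the same argument a measure in $\mathfrak M_-(u)$, contradicting its emptiness. Hence $u = u'$, and the final sentence of (2) is immediate. The main obstacle is the identification of the limit $\mu$ as a bona fide element of $\mathfrak M_-(u_1)$ per \eqref{eq:u-tint}: this is where H4 (continuity of $\partial_u H$) enters, guaranteeing that the Lagrangian identities along $\gamma$ pass cleanly to the weak-$\ast$ limit and that $\mu$ fulfils the invariant/minimizing conditions demanded by the definition; a secondary delicate point is the uniform propagation of $\phi\ge 0$ along the whole backward orbit, iterated from the initial value $\phi(\gamma(0))=m$ via continuity.
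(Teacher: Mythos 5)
Your skeleton reproduces the paper's argument (which the paper packages into Lemma \ref{lem:lya}): from a hypothetical positive maximum, propagate constancy of $\phi=u_1-u_2$ backward along a $u_2$-calibrated curve using that {\bf H3} makes $L$ non-increasing in $u$, deduce the curve is doubly calibrated, form time-averaged empirical measures and pass to a weak-$*$ limit $\mu$. That part is correct and is essentially what the paper does.

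The gap is in the last, decisive step. You write that ``the $u_1$-calibration of $\gamma$ places $\mu$ in $\mathfrak M_-(u_1)$'' and later gesture at {\bf H4} ``guaranteeing the Lagrangian identities pass to the weak-$*$ limit,'' but $u_1$-calibration only places $\mu$ in $\mathfrak M(u_1)$. Membership in $\mathfrak M_-(u_1)$ requires the \emph{additional} vanishing $\int_{TM}\partial_uL(x,v,u_1(x))\,d\mu=0$ from \eqref{eq:u-tint}, which is neither a calibration fact nor a limit-passage fact; it is a genuinely new inference. The argument you need is: double calibration gives $\int[L(x,v,u_1(x))+c]\,d\mu=\int[L(x,v,u_2(x))+c]\,d\mu=0$; on $\mathrm{supp}(\mu)$ one has $u_1-u_2=m>0$, and by {\bf H3} the pointwise inequality $L(x,v,u_1(x))\le L(x,v,u_2(x))$ together with equal integrals forces $L(x,v,u_1(x))=L(x,v,u_2(x))$ on $\mathrm{supp}(\mu)$; since $r\mapsto L(x,v,r)$ is non-increasing and agrees at the two endpoints, it is constant on $[u_2(x),u_1(x)]$, and only now does {\bf H4} enter, giving $\partial_uL(x,v,r)=0$ on that interval, in particular at $r=u_1(x)$, so $\mu\in\mathfrak M_-(u_1)$. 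Without this chain the contradiction with \eqref{eq:cp-mea-2} never materializes, because that hypothesis only quantifies over $\mathfrak M_-(u_1)$, not over all of $\mathfrak M(u_1)$. The same missing link affects your proof of (2): you need the limit measure to land in $\mathfrak M_-(u)$, which again follows from $\partial_uL(x,v,r)=0$ for $r\in[u(x),u'(x)]$ evaluated at $r=u(x)$, not from calibration alone.
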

\begin{thm}[Global]\label{thm:g-cp}
Assume {\bf H1-H3, H4', H5}. 
\begin{itemize}
\item[1)] If $u_1,u_2$ are two solutions of \eqref{eq:hjc} which satisfy 
\beq\label{eq:cp-mea}
\int_{TM} u_1(x) d\mu(x,v)\leq \int_{TM} u_2(x) d\mu(x,v),\quad \forall\; \mu\in\mathfrak M_-^\theta,
\eeq
then $u_1\leq u_2$. The set $\mathfrak M_-^\theta$ which contains {\bf the ordinal Mather measures} associated with $\theta\in I(c)$ is defined in \eqref{eq:c-tint} of Definition \ref{defn:t-mea}.
\item[2)] If $\mathfrak M_-^\theta=\emptyset$, there exists a unique solution of \eqref{eq:hjc}. \item[3)] If $\mathfrak M_-^\theta=\{\mu\}$ is a singleton,  then any two solutions are comparable. In this case, if two solutions $u_1, u_2$ satisfy  
\[
\int_{TM} u_1(x) d\mu(x,v)=\int_{TM} u_2(x) d\mu(x,v), 
\]
then $u_1=u_2$.
\end{itemize}
\end{thm}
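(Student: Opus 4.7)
The plan is to follow the strategy of Theorem~\ref{thm:l-cp} and leverage the global convexity H5 so that the Mather measures used need only depend on $\theta$, not on the individual solution. For any viscosity solution $u$ of \eqref{eq:hjc} with $c=c(\theta)$, combining H5 with the differentiability at $\theta$ from H4' yields the subgradient inequality
\beq\label{plan:sub}
H(x,d_xu,\theta) + \partial_uH(x,d_xu,\theta)\,(u-\theta)\ \leq\ c,
\eeq
to be read in the viscosity sense. This is a single modified Hamilton--Jacobi inequality whose $\theta$-dependent coefficients are intrinsic to $H$, so every solution obeys the same relation, and one can hope to compare solutions through integration against measures attached to the frozen Tonelli Lagrangian $L_\theta(x,v):=\sup_{p}\{p\cdot v - H(x,p,\theta)\}$.

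Integrating \eqref{plan:sub} against a $\theta$-Mather measure $\mu$ uses three classical weak KAM ingredients: the Fenchel--Young inequality $d_xu\cdot v \leq L_\theta(x,v) + H(x,d_xu,\theta)$; the closed-measure identity $\int d_xu\cdot v\,d\mu = 0$, valid for Lipschitz $u$ by mollification; and Mather's minimizing relation $\int L_\theta\,d\mu + c = 0$. Combined, these give
\beq\label{plan:key}
\int \partial_uH(x,d_xu,\theta)\,(u-\theta)\,d\mu\ \leq\ 0
\eeq
for every solution $u$ and every Mather measure $\mu$ of $L_\theta$; since $\partial_uH\geq 0$ by H3, this is a genuine constraint. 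The refined set $\mathfrak M_-^\theta$ in Definition~\ref{defn:t-mea} should then be those Mather measures on which the weight $\partial_uH$ degenerates, i.e., those for which \eqref{plan:key} collapses to $0\leq 0$; for Mather measures outside $\mathfrak M_-^\theta$, \eqref{plan:key} already orders solutions automatically, whereas on $\mathfrak M_-^\theta$ the ordering must be imposed as a hypothesis.

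To prove (1), I would argue by contradiction: assume $w:=u_1-u_2$ attains a positive maximum at some $x_0\in M$, select a backward $u_1$-calibrated curve $\gamma$ through $x_0$ for the contact equation, and form the occupation measures $\frac{1}{T}\int_{-T}^{0}\delta_{(\gamma(s),\dot\gamma(s))}\,ds$ on $TM$. A weak-$\star$ subsequential limit $\mu^\star$ is a closed probability measure, and the convexity H5 together with \eqref{plan:sub} should allow one to verify (i) that $\mu^\star$ is a Mather measure of $L_\theta$, and (ii) that the calibration forces $\partial_uH(x,d_xu_1,\theta)(u_1-\theta)=0$ along $\gamma$, placing $\mu^\star$ in $\mathfrak M_-^\theta$. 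Since $w>0$ in a neighbourhood of $\gamma$ for large negative time (the calibrated curve cannot leave the maximum set of $w$ by semicontinuity), this contradicts the assumed inequality $\int u_1\,d\mu^\star \leq \int u_2\,d\mu^\star$. Parts (2) and (3) follow formally: when $\mathfrak M_-^\theta=\emptyset$ the hypothesis of (1) is vacuous in both directions so $u_1=u_2$; when $\mathfrak M_-^\theta=\{\mu\}$, the two real numbers $\int u_i\,d\mu$ are automatically ordered, (1) gives comparability, and the equality case is two symmetric applications of (1).

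\textbf{Main obstacle.} The delicate point is verifying that the limit measure $\mu^\star$ lands precisely in $\mathfrak M_-^\theta$ rather than in the larger Mather set $\mathfrak M^\theta$. This requires simultaneously translating the contact calibration---whose dynamics depends on $u$ itself---into the frozen-Lagrangian framework through the linearization \eqref{plan:sub}, and checking that the extra degeneracy characterizing $\mathfrak M_-^\theta$ survives in the weak-$\star$ limit. Justifying \eqref{plan:sub} in the viscosity sense from H4' alone, and the closed-measure identity for Lipschitz viscosity solutions under the monotonicity H3, are further technical points that the proof must address carefully.
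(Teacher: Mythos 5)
Your high-level strategy (occupation measure of a backward calibrated curve at a positive maximum of $u_1-u_2$, contradiction against \eqref{eq:cp-mea}) matches the paper, but the mechanism you propose for forcing the limit measure into $\mathfrak M_-^\theta$ is not the one the paper uses, and your version leaves the essential step unproved. The paper does not linearize $H$ in $u$. Instead it invokes Lemma \ref{lem:lya}, whose output is a closed measure $\mu$ for which $\gamma$ is \emph{simultaneously} $u_1$- and $u_2$-calibrated, so that
\[
\int_{TM}\bigl[L(x,v,u_1(x))+c\bigr]\,d\mu=\int_{TM}\bigl[L(x,v,u_2(x))+c\bigr]\,d\mu=0,
\]
with $u_1>u_2$ on $\pi(\operatorname{supp}\mu)$. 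Monotonicity \textbf{H3} then forces $L(x,v,\cdot)$ to be \emph{constant} on $[u_2(x),u_1(x)]$; the concavity of $L$ in $u$ (dual to \textbf{H5}) upgrades this to constancy on all of $(-\infty,u_1(x)]$ and in particular at $\theta$, which first shows $\mu\in\mathfrak M^\theta$ and then, via \textbf{H4'}, that $\partial_uL(x,v,\theta)=0$ on $\operatorname{supp}\mu$, i.e. $\mu\in\mathfrak M_-^\theta$. Your subgradient inequality \eqref{plan:sub} and the resulting constraint \eqref{plan:key} are formally correct, but they do not supply this chain: \eqref{plan:key} weights $\partial_uH$ by $(u-\theta)$ rather than asserting $\int\partial_uL(x,v,\theta)\,d\mu=0$, so it does not match the definition of $\mathfrak M_-^\theta$; and more fundamentally, the occupation measure $\mu^\star$ you build from a $u_1$-calibrated curve is \emph{a priori} a Mather measure for ${\bf L}^{u_1}$, not for the frozen $L_\theta$, and the subgradient inequality alone does not bridge that gap because it only yields $H(x,d_xu,\theta)\le c$ at points where $u\ge\theta$, which need not hold along $\gamma$.

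In short, the point you flag as the ``main obstacle'' — showing $\mu^\star$ lands in $\mathfrak M_-^\theta$ — is exactly where your proposal stops, and the tool you propose (\eqref{plan:sub}) does not close it. What is missing is the dual-calibration content of Lemma \ref{lem:lya}: without knowing that $\gamma$ also calibrates $u_2$ and that the maximum of $u_1-u_2$ is preserved along it, you cannot deduce the two vanishing integrals above, hence cannot run the \textbf{H3}+\textbf{H5} constancy argument that identifies $\mu$ with a $\theta$-Mather measure on which $\partial_uL(\cdot,\cdot,\theta)$ vanishes. A secondary issue: your appeal to the closed-measure identity $\int\langle d_xu,v\rangle\,d\mu=0$ for merely Lipschitz $u$ is not automatic from mollification and in this setting would itself require the calibration structure to justify. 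Parts (2) and (3) of your sketch are fine as formal consequences of (1), as in the paper.
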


%As applications of Theorem \ref{thm:l-cp} and Theorem \ref{thm:g-cp}, we get:

%\begin{cor}[Local]\label{cor:l-cp}
%Suppose {\bf H1-H4} hold. If $\mathfrak M_-(u)=\emptyset$ for some  solution $u$ of \eqref{eq:hjc}, then $u$ has to be the minimal solution of \eqref{eq:hjc}. Consequently, if there exist two solutions $u_1,u_2$ of \eqref{eq:hjc} such that $\mathfrak M_-(u_1)=\mathfrak M_-(u_2)=\emptyset$, then $u_1=u_2$. %\textr{Furthermore, if for any solution $u$ of \eqref{eq:hjc} the associated $\mathfrak M_-(u)=\emptyset$, then \eqref{eq:hjc} actually posseses only one solution.}
%\end{cor}

%\begin{cor}[Global]\label{cor:g-cp}
%Suppose {\bf H1-H3, H4', H5} hold. 
%\begin{itemize}
%\item[1)] If $\mathfrak M_-(c)=\emptyset$, there exists a unique solution of \eqref{eq:hjc}. \item[2)] If $\mathfrak M_-(c)=\{\mu\}$ is a singleton,  then any two solutions are comparable. In this case, if two solutions $u_1, u_2$ satisfy  
%\[
%\int_{TM} u_1(x) d\mu(x,v)=\int_{TM} u_2(x) d\mu(x,v), 
%\]
%then $u_1=u_2$.
% \end{itemize}
%\end{cor}

\begin{rmk}
\begin{itemize}
\item[(i)] 
A different version of Theorem \ref{thm:l-cp} was proved in  \cite{JMT} under stronger  assumptions. Here we make the results optimal in requiring weaker assumptions and our definition of $\mathfrak M_-(u)$ is quite different. Later we can see that our definition is invariant w.r.t. certain contact Hamilton's equation in 
%does not rely on assumption {\bf H5} as in  \cite{JMT}.
%So we can get rid of assumption {\bf H5} and prove more dynamical properties of  $\mathfrak M_-(u)$ in 
Theorem \ref{thm:mat-set}.  As far as we know, Theorem \ref{thm:g-cp} is new in the literature. 

%Our approach strongly benefits from the dynamical interpretation of $\mathfrak M_-(u)$ which does not rely on assumption {\bf H5} and requires low regularity of the Hamiltonian $H$, see \cite{JMT} for a different treatment.
\item[(ii)] Assumption {\bf H2} can be weakened to a coercive condition:\vspace{3pt}

\noindent{\bf H2':} For any $(x,u)\in M\times\R$, there holds
	$\lim_{|p|\rightarrow+\infty}H (  x , p , u ) =+\infty$.\vspace{3pt}
	
\noindent Then aforementioned conclusions still hold. This is because once  $c\in\mathfrak C$ and solutions $u_1, u_2$ of \eqref{eq:hjc} established, we can always modify the Hamiltonian $H$ for $|p|\gg 1$, such that the modified Hamiltonian is superlinear (or even quadratic) in $p$, which still possesses these two solutions (see \cite{CCIZ,DFIZ} for a standard argument). 
%\item[(iii)] The first corollary is new in the literature. The second corollary generalizes Theorem 8.1 of \cite{CFZZ} by its alternative (2). 
\item[(iii)] In Section \ref{s3}, we propose several examples to verify the necessity of our assumptions. In our context, conditions \eqref{eq:cp-mea-2} and \eqref{eq:cp-mea} seem to be essential and optimal for the comparison principle. 
\end{itemize}
\end{rmk}

\subsection{Contact Hamiltonian dynamics of the Mather set}
For $C^2-$smooth contact Hamiltonians, a unique {\it contact vector field} $X_H: T^*M\times\R\rightarrow T(T^*M\times\R)$ can be established by 
\[
\cL_{X_H}\alpha=-\partial_u H(x,p,u) \alpha, \quad -H(x,p,u)=\alpha(X_H)
\]
where $\cL_{X_H}$ is the Lie derivative operator with respect to $X_H$, $\alpha:= du-p dx$ is called the {\it contact $1-$form} and $-\partial_u H(x,p,u)$ is called the {\it multiplier} of $H$ \cite{FQ}. In $(x,p,u)-$coordinates, we can formulate the {\it contact Hamilton's equation} (associated with $X_H$) by
\beq\label{eq:ode}
\left\{
\beal
&\dot x=\partial_p H(x,p,u),\\
&\dot p=-\partial_x H-\partial_u H(x,p,u) p, \qquad (x,p,u)\in T^*M\times\R.\\
&\dot u=\langle p,\partial_p H\rangle- H(x,p,u),
\enal
\right.
\eeq
%\textr{In this case, the {\it Reeb's vector field} $Y_{H}:T^*M\times\R\rightarrow T(T^*M\times\R)$ decided by \[\alpha(Y_H)=1,\quad d\alpha(Y_H,\cdot)\equiv 0\] actually equals $\partial_u$.} 
The {\it Legendrian submanifolds}, defined by $n-$dimensional integrable manifolds w.r.t. \eqref{eq:ode}  have deep connection with contact topology and non-equilibrium thermodynamics, see \cite{A,BLN,EP,dL,L,R}. %In particular, if $H$ is independent of u, then \eqref{eq:ode} reduces to the classical Hamiltonian system and Legendrian submanifolds coincide with the {\it Lagrangian submanifolds}.
In particular, %the viscosity solutions indeed supply us with certain Lagendrian submanifolds. Precisely, usually viscosity solution $u$ is differentiable at almost everywhere of $M$, hence 
the generalized graph of a solution 
\[
{\rm Graph}(u):=\overline{\{(x, d_x u, u(x))\in T^*M\times\R| u(x)\text{ is differentiable at }x\in M\}}
\]
supplies us a Legendrian submanifold, although the solution $u$ is usually not smooth and  
${\rm Graph}(u)$ may be rather fragmental. Nonetheless, ${\rm Graph}(u)$ indeed contains a regular part (the so called {\it Mather set}), which dominates the asymptotic behaviors of local trajectories of \eqref{eq:ode} and further leads to the multiplicity of solutions and the complicated ordering structure among them.\medskip

In the following result, we show $\mathfrak M(u)$ supplies us an invariant set w.r.t. \eqref{eq:ode}, which is the so called {\it Mather set} and presents as a regular part of ${\rm Graph}(u)$. For this purpose, we need the so called {\it Tonelli Hamiltonian} in the weak KAM theory \cite{F}. Precisely, the Hamiltonian $H:(x,p,u)\in T^*M\times\R\rightarrow\R$ is requested to be $C^2-$smooth, superlinear and positive definite in $p\in T_x^*M$ for any $(x,u)\in M\times\R$.

%To any continuous Hamiltonian $H$ satisfying {\bf H1-H2}  we can associate a {\it Lagrangian} $L:(x,v,u)\in TM\times\R\rightarrow\R$ via the {\it Fenchel's transformation}:
%\beq\label{eq:legend}
%L(x,v,u):=\max_{p\in T^*_xM}\big(\langle p,v\rangle-H(x,p,u)\big).
%\eeq
%Furthermore, if the Hamiltonian $H:(x,p,u)\in T^*M\times\R\rightarrow\R$ is requested to be $C^2-$smooth, superlinear and positive definite in $p\in T_x^*M$ for any $(x,u)\in M\times\R$, i.e. the so called {\it Tonelli Hamiltonian}, then the associated Lagrangian is also $C^2-$smooth with the maximum in \eqref{eq:legend} achieved by $p=\partial_vL(x,v,u)$. In this case,we can show that $\mathfrak M(u)$ supplies us an invariant set w.r.t. \eqref{eq:ode}, which is the so called {\it Mather set} and presents as a regular part of ${\rm Graph}(u)$:
\begin{thm}\label{thm:mat-set}
For a Tonelli Hamiltonian $H$, $c\in\mathfrak C$ fixed and a fixed solution $u(x)$ of \eqref{eq:hjc}, the Mather set 
\beq
\wt\cM(u):=\bigg\{\big(x,\partial_vL(x,v,u(x)),u(x)\big)\in T^*M\times\R\bigg|(x,v)\in \overline{\bigcup_{\mu\in\mathfrak M(u)}{\rm supp}(\mu)}\bigg\}
\eeq
is a compact invariant set w.r.t. equation \eqref{eq:ode} (associated with $H(x,p,u)-c$). Denote $\cM(u):=\pi\footnote{Here $\pi:TM\times\R ({\rm resp.}\; T^*M\times\R)\rightarrow M$ is the canonical projection.}\wt\cM(u)$ by the projected set onto $M$, then we actually have 
\beq\label{eq:graphic}
\wt\cM(u)=\{(x,d_xu(x),u(x))\in TM\times\R|x\in \cM(u)\}
\eeq
 and $\pi^{-1}:\cM(u)\rightarrow\wt\cM(u)$ is Lipschitz continuous. 
\end{thm}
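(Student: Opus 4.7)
My plan is to import the classical Mather graph theorem into the contact setting by treating the continuous solution $u$ as a frozen parameter inside the Tonelli Lagrangian $L(x,v,u)$ dual to $H$. The central auxiliary object is the \emph{effective} Lagrangian $L_u(x,v):=L(x,v,u(x))$, which inherits strict convexity and superlinearity in $v$ from the Tonelli property of $H$.

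First I would unpack the definition of $\mathfrak M(u)$ referenced in Definition \ref{defn:t-mea}: each $\mu\in\mathfrak M(u)$ should be a Borel probability measure on $TM$ that is closed in Mather's sense and minimises the $u$-twisted action $\int_{TM}[L(x,v,u(x))+c]\,d\mu(x,v)$, the minimum being $0$ by the Mañé identity for the solution $u$. A standard realisation argument then supplies, for $\mu$-a.e.\ $(x,v)\in{\rm supp}(\mu)$, a doubly-calibrated extremal curve $\gamma\colon\R\to M$ with $\gamma(0)=x$, $\dot\gamma(0)=v$, along which
\[
u(\gamma(t_2))-u(\gamma(t_1))=\int_{t_1}^{t_2}L\bigl(\gamma,\dot\gamma,u\circ\gamma\bigr)\,ds+c(t_2-t_1)\qquad\forall\,t_1<t_2.
\]
Fathi's dominated-function argument then forces $u$ to be differentiable at every such $x$ with $d_xu=\partial_vL(x,v,u(x))$; taking closures delivers the graph identity \eqref{eq:graphic} and compactness of $\wt\cM(u)$ (superlinearity bounds velocities on the supports).

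Next I would establish invariance under \eqref{eq:ode}. Along a calibrated curve $\gamma$ set $p(t):=\partial_vL(\gamma,\dot\gamma,u\circ\gamma)$ and $U(t):=u\circ\gamma$. Since $\gamma$ is an Euler--Lagrange extremal of $L_u$ perturbed by the $u$-coupling, a Legendre transform reproduces the first two lines of \eqref{eq:ode} with $H$ replaced by $H-c$, while differentiating the calibration identity yields $\dot U=L(\gamma,\dot\gamma,U)+c=\langle p,\dot\gamma\rangle-H(\gamma,p,U)+c$, which is precisely the third line of the modified \eqref{eq:ode}. Invariance of $\wt\cM(u)$ then follows from invariance of ${\rm supp}(\mu)$ under the $L_u$-extremal flow together with the lift formula already proved.

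Finally, for the Lipschitz graph property I would adapt Mather's crossing argument to $L_u$. Given $x_1,x_2\in\cM(u)$ close, take calibrated curves $\gamma_i$ through them with velocities $v_i$, swap short endpoint pieces via geodesics connecting intermediate points, and use strict convexity of $L_u$ in $v$ together with the minimality of each $\gamma_i$ to obtain a uniform bound $|v_1-v_2|\le K|x_1-x_2|$. Composition with $\partial_vL(\cdot,\cdot,u(\cdot))$ then transfers this into Lipschitz dependence of $p$ on $x$. The main obstacle I anticipate is that the classical Mather argument requires joint $C^2$ regularity of the Lagrangian in $(x,v)$, whereas $L_u$ is only continuous in $x$ because $u$ is. To close this gap I would exploit two features specific to our setting: (i) strict convexity in $v$ is inherited pointwise and uniformly on compacta from the Tonelli property of $H$; and (ii) the gradient bound $|d_xu(x)|\le C$ on the differentiability set, obtained from coercivity of $H$ and the equation $H(x,d_xu,u)=c$ itself, forces $u$ to be locally Lipschitz on $\cM(u)$. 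These two ingredients suffice to run the crossing estimate without further regularity of $L_u$ in the base variable.
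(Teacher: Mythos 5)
Your plan aligns with the paper's strategy at the level of ideas (frozen Lagrangian, calibrated curves, differentiability, Legendre transform, graph theorem), but two of the steps have genuine gaps. The first is in the invariance argument: a calibrated curve is a priori only Lipschitz, and because $L_u(x,v)=L(x,v,u(x))$ is merely Lipschitz in $x$, there is no classical Euler--Lagrange equation for $L_u$, so ``Legendre transform of the Euler--Lagrange extremal'' does not directly produce the first two lines of \eqref{eq:ode}. The paper's Lemma~\ref{lem:cali-flow} is devoted precisely to closing this: it compares $u\circ\gamma$ with the value function $U(x,t)$ of the locally well-posed evolutionary Cauchy problem for the contact H--J equation, shows $F(\tau)=U(\gamma(\tau+s),\tau)-u(\gamma(\tau+s))\equiv 0$ by a two-sided contradiction, and only then concludes that $\gamma$ is $C^1$ and that its contact lift is a genuine trajectory of \eqref{eq:ode}. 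Your proposal needs an analogue of this step; it cannot be waved through by calling $\gamma$ an extremal of $L_u$.

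The more serious gap is in the Lipschitz-graph step. You correctly diagnose that Mather's crossing argument requires more than continuity of the Lagrangian in the base variable, but the compensation you propose — uniform strict convexity in $v$ plus the a priori Lipschitz bound on $u$ coming from coercivity — only makes $L_u$ Lipschitz (not $C^{1,1}$ or $C^2$) in $x$. With that regularity the crossing estimate balances a quadratic gain of order $|v_1-v_2|^2$ against a base error that is \emph{linear} (not quadratic) in $|x_1-x_2|$, and so the argument can only deliver a H\"older-$\tfrac12$ modulus for $\pi^{-1}$, not a Lipschitz one. The Lipschitz property of the Mather/Aubry set for merely continuous quasiconvex Hamiltonians is established by a different, PDE-theoretic mechanism — Theorem~7.8 and Corollary~10.3 of Fathi--Siconolfi \cite{FS}, which work with strict subsolutions and regularizations of the frozen equation ${\bf H}^u(x,d_xu)=c$ — and this is exactly what the paper invokes. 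To make your proof close, you should replace the crossing lemma by that route (or some other mechanism that does not tacitly treat $L_u$ as a $C^2$ Lagrangian in $x$).
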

\begin{rmk}
\begin{itemize}
\item[(i)] As a subset of $\wt\cM(u)$, we can define $\wt\cM_-(u)$ accordingly, which is also invariant w.r.t. equation \eqref{eq:ode}. In the sense of distribution, the multiplier $\partial_uL(x,v, u)$ is degenerate on $\wt\cM_-(u)$. That intrinsically causes the multiplicity of solutions. 
\item[(ii)] Under the same conditions as in Theorem \ref{thm:mat-set},  for any $\theta\in I(c)$, the Mather set (associated with $H(\cdot,\cdot,\theta):TM\rightarrow\R$) 
\beq
\wt\cM^\theta:=\bigg\{(x,\partial_vL(x,v))\in T^*M\bigg|(x,v)\in \overline{\bigcup_{\mu\in\mathfrak M^\theta}{\rm supp}(\mu)}\bigg\}
\eeq
is a Lipschitz graph over $\cM^\theta:=\pi\wt\cM^\theta$, which is invariant w.r.t. the Hamilton's equation associated with $H(\cdot,\cdot,\theta)$. That exactly coincides with the classical definition in the Aubry-Mather theory or weak KAM  
theory \cite{F,Mat}. Moreover, any subsolution $\om$ of 
\[
H(x,d_x\om,\theta)\leq c(\theta),\quad x\in M
\]
has to be a solution and differentiable on $\cM^\theta$, see \cite{DFIZ,FS}.
\end{itemize}
\end{rmk}

\subsection{Regularity of $c(\theta)$ and its constraint to Mather measures}

Since the definition of $\mathfrak M^\theta$ (resp. $\mathfrak M_-^\theta$) does not rely on solutions of \eqref{eq:hjc}, therefore, comparing to $\mathfrak M(u)$ (resp. $\mathfrak M_-(u)$) it has natural advantages in describing the structure of $\mathfrak C$:
 
\begin{thm}\label{thm:h4}
Assume {\bf H1-H4} for a continuous Hamiltonian $H$. The ergodic constant $c:\theta\in \R\rightarrow\R$ satisfies the followings:
\begin{itemize}
\item[1)] $c:\theta\in\R\rightarrow\R$ is locally Lipschitz, which is therefore differentiable for a.e. $\theta\in \R$;
\item[2)] If $\mathfrak M_-^\theta\neq\emptyset$, then the left hand derivative $c'_-(\theta)$ exists and equals $0$;
\item[3)] If $\mathfrak M_-^\theta=\emptyset$, then
$\varliminf_{\theta'\rightarrow\theta}\frac{c(\theta')-c(\theta)}{\theta'-\theta}>0$. Consequently, $c$ is strictly increasing at $\theta$.
\item[4)] If $c$ is differentiable at $\theta$, then 
\[
c'(\theta)=\max_{\mu\in\mathfrak M^\theta}\int_{TM}\partial_u H(x,v,\theta) d\mu(x,v)=\min_{\mu\in\mathfrak M^\theta}\int_{TM}\partial_u H(x,v,\theta) d\mu(x,v).
\]
Consequently, if $c'(\theta)>0$ then $\mathfrak M_-^\theta=\emptyset$ and $c'(\theta)=\int_{TM}\partial_u H(x,v,\theta) d\mu(x,v)$ for any $\mu\in\mathfrak M^\theta\backslash\mathfrak M_-^\theta$; If $c'(\theta)=0$, then $\mathfrak M^\theta\backslash\mathfrak M_-^\theta=\emptyset$. 
\item[5)] If $\varlimsup_{\theta'\rightarrow\theta_-}\frac{c(\theta')-c(\theta)}{\theta'-\theta}=0$, then $\varlimsup_{\theta'\rightarrow\theta_-}\mathfrak M^{\theta'}\subset\mathfrak M_-^\theta$;
\item[6)] If $\varlimsup_{\theta'\rightarrow\theta_-}\frac{c(\theta')-c(\theta)}{\theta'-\theta}>0$, then $\mathfrak M_-^\theta=\emptyset$;
\item[7)] If $\varlimsup_{\theta'\rightarrow\theta_+}\frac{c(\theta')-c(\theta)}{\theta'-\theta}=0$, then $\mathfrak M^\theta=\mathfrak M_-^\theta$;
\item[8)] If $\varlimsup_{\theta'\rightarrow\theta_+}\frac{c(\theta')-c(\theta)}{\theta'-\theta}>0$, then $\varlimsup_{\theta'\rightarrow\theta_+}\mathfrak M^{\theta'}\subset\mathfrak M^\theta\backslash\mathfrak M_-^\theta$.
\end{itemize}
\end{thm}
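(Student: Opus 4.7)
Every item reduces to a single sandwich estimate for $c(\theta)$ derived from the Ma\~n\'e-type variational formula
\[
c(\theta)=-\min_{\mu}\int_{TM} L(x,v,\theta)\,d\mu(x,v),
\]
where $L(x,v,u):=\sup_p\{p\cdot v-H(x,p,u)\}$ is the Legendre dual in $p$, the minimum is over closed probability measures on $TM$, and $\mathfrak M^\theta$ is the minimizer set; the subset $\mathfrak M_-^\theta$ is characterized by $\int\partial_u H\,d\mu=0$ (equivalently $\partial_u H\equiv 0$ on $\mathrm{supp}\,\mu$, using \textbf{H3}). Since $\partial_u L=-\partial_u H(x,p^\ast,u)$ along any Legendre maximizer by \textbf{H4}, plugging $\mu^\theta\in\mathfrak M^\theta$ and $\mu^{\theta'}\in\mathfrak M^{\theta'}$ into the variational formula at the \emph{opposite} level and subtracting gives, for $\theta<\theta'$,
\[
\int_\theta^{\theta'}\!\!\int_{TM}\partial_u H(x,p^\ast,s)\,d\mu^\theta\,ds \;\le\; c(\theta')-c(\theta) \;\le\; \int_\theta^{\theta'}\!\!\int_{TM}\partial_u H(x,p^\ast,s)\,d\mu^{\theta'}\,ds.
\]
Item 1 is immediate: both sides are $O(|\theta'-\theta|)$ locally uniformly in $\theta$, since Mather supports are tight by superlinearity of $L$ and $\partial_u H$ is locally bounded on them.

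Dividing the sandwich by $\theta'-\theta$ and sending $\theta'\to\theta^\pm$, using continuity of $\partial_u H$ and weak$^\ast$ upper semicontinuity of $\theta\mapsto\mathfrak M^\theta$, one extracts two envelope inequalities: the lower bound gives $\varliminf \geq \int\partial_u H\,d\mu^\theta$ for every $\mu^\theta\in\mathfrak M^\theta$, and the upper bound gives $\varlimsup \leq \int\partial_u H\,d\mu$ for some weak$^\ast$ cluster point $\mu$ of $\mathfrak M^{\theta'}$. Item 2 then picks $\mu\in\mathfrak M_-^\theta$ in the upper bound and reads off $c'_-(\theta)=0$ (combined with monotonicity of $c$). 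Item 3 uses compactness of $\mathfrak M^\theta$ and continuity of $\mu\mapsto\int\partial_u H\,d\mu$ to promote the pointwise positivity (guaranteed when $\mathfrak M_-^\theta=\emptyset$) to a uniform positive lower bound on $\mathfrak M^\theta$, hence on $\varliminf$. Item 4 is the collapse of the sandwich when $c'(\theta)$ exists: the two envelope inequalities force $\int\partial_u H\,d\mu=c'(\theta)$ for every $\mu\in\mathfrak M^\theta$, yielding both $\max=\min=c'(\theta)$ and, via $\partial_u H\ge 0$, the two listed consequences about the cases $c'(\theta)>0$ and $c'(\theta)=0$.

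Items 5--8 are the same sandwich read at the level of cluster sets. Item 5 ($\theta'\to\theta^-$, $\varlimsup=0$): along any sequence $\mu^{\theta'_n}\to\mu$, the lower bound forces $\int\partial_u H\,d\mu\le 0$, hence $=0$ by \textbf{H3}, so $\mu\in\mathfrak M_-^\theta$. Item 6: if $\mathfrak M_-^\theta$ contained some $\mu^\theta$, inserting it into the upper bound would force $\varlimsup=0$, contradicting the hypothesis. Item 7 is analogous on the right and yields the stronger conclusion that \emph{every} $\mu^\theta\in\mathfrak M^\theta$ itself satisfies $\int\partial_u H\,d\mu^\theta=0$, i.e.\ $\mathfrak M^\theta=\mathfrak M_-^\theta$. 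Item 8 is the subtlest: the upper bound along a $\varlimsup$-achieving sequence produces a cluster $\mu^\star$ with $\int\partial_u H\,d\mu^\star\ge\varlimsup>0$, so $\mu^\star\in\mathfrak M^\theta\setminus\mathfrak M_-^\theta$; applying the lower bound with this $\mu^\star$ upgrades $\varliminf \geq \int\partial_u H\,d\mu^\star>0$, so any additional cluster $\mu'$ must satisfy a subsequential limit that is simultaneously $\geq\varliminf>0$ and $\leq \int\partial_u H\,d\mu'$, forcing $\int\partial_u H\,d\mu'>0$ and excluding $\mathfrak M_-^\theta$ from the cluster set.

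\textbf{Main obstacle.} The real difficulty is not any individual item but setting up the sandwich under the minimally smooth hypotheses \textbf{H1}--\textbf{H4} (no Tonelli). The Legendre maximizer $p^\ast(x,v,u)$ may be set-valued, so $\partial_u L$ must be understood through a measurable selection, and one must verify continuity of $s\mapsto\int\partial_u H(x,p^\ast(x,v,s),s)\,d\mu$ on the relevant Mather supports so that the fundamental theorem of calculus commutes with the spatial integral. The tightness of $\bigcup_{|\theta'-\theta|\le\delta}\mathfrak M^{\theta'}$ and the weak$^\ast$ upper semicontinuity $\theta\mapsto\mathfrak M^\theta$, needed to extract cluster measures, should follow from superlinearity of $L$ and the continuous dependence of $H$ on $\theta$ via standard weak KAM arguments, but require care in the non-Tonelli regime. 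Once the sandwich and these compactness properties are established, the entire theorem is bookkeeping.
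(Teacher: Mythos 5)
Your proposal follows the paper's proof essentially line for line: both rest on the two-sided sandwich obtained from the variational formula $c(\theta)=-\inf_{\mu\in\cC}\int_{TM} L(x,v,\theta)\,d\mu$, namely for $a<b$
\[
\int_{TM}\frac{L(x,v,a)-L(x,v,b)}{b-a}\,d\mu_a\;\leq\;\frac{c(b)-c(a)}{b-a}\;\leq\;\int_{TM}\frac{L(x,v,a)-L(x,v,b)}{b-a}\,d\mu_b
\]
(and its mirror for $b<a$), which is exactly your integrated estimate rewritten through $\partial_u L=-\partial_u H$ along the Legendre dual (justified under \textbf{H4}); the selection and compactness concerns you flag are dispatched in the paper by citing Lemma 4.1 of \cite{Ch} for the existence and continuity of $\partial_u L$ and \cite{CFZZ} for the uniform compactness of the Mather sets. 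Your explicit treatment of items 5)--8) --- in particular the observation in item 8) that inserting a cluster of $\mathfrak M^{\theta'}$ into \emph{both} sides of the sandwich squeezes the difference quotients and forces the one-sided derivative to exist before the conclusion can be read off --- is more detailed than the paper's one-line "these four items can be drawn directly from \eqref{eq:+} and \eqref{eq:-}," and is correct.
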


\begin{cor}\label{cor:h5-c}
Assume {\bf H1-H4} and\smallskip

\noindent{\bf H5':} $H(x,p,u)$ is convex in $(p,u)$ for any $x\in M$.

Then the followings hold: 
\begin{itemize}
\item[1)] For the case $c_0:=\min\mathfrak C$ is finite, $\mathfrak M^\theta_-\neq \emptyset$ for any $\theta\in c^{-1}(c_0)$.
\item[2)] If $c(\theta)\in(c_0,+\infty)$ with $c_0:=\inf\mathfrak C$ (could be $-\infty$), then $\mathfrak M_-^{\theta}=\emptyset$.
\item[3)] For any $\theta\in\R$, if $\mathfrak M_-^\theta\neq\emptyset$, then $c(\theta)=\min\mathfrak C$ is finite; 
\item[4)] If $\mathfrak M_-^\theta=\emptyset$, then $c(\theta)>\inf\mathfrak C$.
\end{itemize}
\end{cor}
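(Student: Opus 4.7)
The plan is to first establish that, under H5', the ergodic constant $c:\R\to\R$ is convex, and then combine this with the one-sided derivative information in Theorem~\ref{thm:h4}, the monotonicity of $c$ from Proposition~\ref{prop:mane}, and elementary convex analysis to derive all four items. Parts 2) and 4) will emerge by contraposition from 3) and 1) respectively, so the essential work is in establishing parts 1) and 3).

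For the convexity step, fix $\theta_0,\theta_1\in\R$, $\lambda\in[0,1]$, and set $\theta_\lambda=(1-\lambda)\theta_0+\lambda\theta_1$. Let $\omega_i$ be a viscosity solution of $H(x,d\omega_i,\theta_i)=c(\theta_i)$. By coercivity each $\omega_i$ is Lipschitz, hence a.e.\ differentiable; for coercive Hamiltonians convex in $p$ the viscosity subsolution inequality is equivalent to $H(x,d\omega_i,\theta_i)\leq c(\theta_i)$ at a.e.\ $x$. Then $\omega_\lambda:=(1-\lambda)\omega_0+\lambda\omega_1$ is Lipschitz with $d\omega_\lambda=(1-\lambda)d\omega_0+\lambda d\omega_1$ a.e., and the joint convexity H5' of $H$ in $(p,u)$ gives
\[
H(x,d\omega_\lambda,\theta_\lambda)\leq (1-\lambda)H(x,d\omega_0,\theta_0)+\lambda H(x,d\omega_1,\theta_1)\leq (1-\lambda)c(\theta_0)+\lambda c(\theta_1)
\]
almost everywhere, so $\omega_\lambda$ is a viscosity subsolution at level $(1-\lambda)c(\theta_0)+\lambda c(\theta_1)$ with parameter $\theta_\lambda$. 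This yields $c(\theta_\lambda)\leq(1-\lambda)c(\theta_0)+\lambda c(\theta_1)$, and combined with Proposition~\ref{prop:mane}, $c$ is non-decreasing and convex on $\R$.

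Next I would prove 3) and 1). For 3), assume $\mathfrak M_-^\theta\neq\emptyset$; Theorem~\ref{thm:h4}(2) gives $c'_-(\theta)=0$, which by convexity is a subgradient at $\theta$, so $c(\theta')\geq c(\theta)+0\cdot(\theta'-\theta)=c(\theta)$ for every $\theta'\in\R$. Hence $c$ attains its global minimum at $\theta$, i.e.\ $c(\theta)=\min\mathfrak C$ is finite. For 1), fix $\theta\in c^{-1}(c_0)$ with $c_0=\min\mathfrak C$ finite; monotonicity and minimality force $c\equiv c_0$ on $(-\infty,\theta]$, so $\varlimsup_{\theta'\to\theta_-}(c(\theta')-c(\theta))/(\theta'-\theta)=0$ and Theorem~\ref{thm:h4}(5) yields $\varlimsup_{\theta'\to\theta_-}\mathfrak M^{\theta'}\subset\mathfrak M_-^\theta$. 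Since Mather measures exist at every parameter and are tight on a uniform compact set of $TM$ by superlinearity, picking $\mu_n\in\mathfrak M^{\theta_n}$ for $\theta_n\nearrow\theta$ and extracting a weak* accumulation point produces an element of $\mathfrak M_-^\theta$.

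Parts 2) and 4) then follow by contraposition. For 2): if $c(\theta)>\inf\mathfrak C$ then $c(\theta)\neq\min\mathfrak C$ (either because $\min\mathfrak C<c(\theta)$ when it is attained, or because $\min\mathfrak C$ does not exist), so 3) forces $\mathfrak M_-^\theta=\emptyset$. For 4): if $c(\theta)\leq\inf\mathfrak C$ then $c(\theta)=\inf\mathfrak C$; since $c(\theta)\in\mathfrak C$ this means $\min\mathfrak C=\inf\mathfrak C$ is attained and finite, and 1) gives $\mathfrak M_-^\theta\neq\emptyset$, contradicting the hypothesis. I expect the main obstacle to be the viscosity step inside Step~1: under the weak regularity H1--H4 (no $C^2$ assumption) one must carefully invoke the standard equivalence between viscosity subsolutions and a.e.\ classical subsolutions for coercive Hamiltonians convex in $p$, so that the joint convexity H5' can propagate cleanly through the convex combination. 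Everything after this step uses only elementary convex analysis combined with Theorem~\ref{thm:h4}.
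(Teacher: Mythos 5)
Your proof is correct, and it follows essentially the same route as the paper: establish that $c$ is convex under {\bf H5'} (this is the paper's Proposition 5.2), then combine convexity and monotonicity of $c$ with the one-sided derivative facts from Theorem \ref{thm:h4} (in particular items 2) and 5)), plus a compactness/accumulation argument to produce ordinal Mather measures in item 1). Your streamlining of 2) and 4) as logical consequences (contrapositives, modulo the observation that $c(\theta)\in\mathfrak C$ so $c(\theta)\le\inf\mathfrak C$ forces $\inf\mathfrak C$ to be attained) is tidier than the paper, which proves item 4) by a separate contradiction argument involving a uniform $\delta>0$ lower bound on $-\int\partial_uL\,d\sigma_\theta$ over $\varlimsup_{\theta'\to\theta_-}\mathfrak M^{\theta'}$; your derivation of 4) directly from 1) avoids repeating that compactness estimate. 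Your phrasing of item 3) via subgradients ($c'_-(\theta)=0\in\partial c(\theta)$ implies $\theta$ is a global minimizer) is equivalent to, but a bit cleaner than, the paper's monotone-derivative argument. One small point worth spelling out in item 1): for the limit $\mu_n\rightharpoonup\mu_*$ to land in $\mathfrak M_-^\theta$, you need the $\mu_n$ to be supported in a fixed compact subset of $TM$ uniformly in $n$; the paper cites \cite{CFZZ} for precisely this uniform compactness of the Mather sets $\wt\cM^{\theta'}$ for $\theta'$ in a bounded interval, and you should cite or justify this rather than attribute it loosely to superlinearity.
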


\subsection*{Organization of the article} The paper is organized as follows: In Section \ref{s2}, we present a variational interpretation to the viscosity solutions of \eqref{eq:hjc}, then prove Theorem \ref{thm:l-cp} and Theorem \ref{thm:g-cp}. In Section \ref{s3}, we supply a list of examples to show the optimality of our assumptions in the main conclusions. In Section \ref{s4}, we prove the invariance of the Mather set w.r.t. the contact Hamilton's equation \eqref{eq:ode}. In Section \ref{s5}, we discuss the micro structure of $\mathfrak C$ and prove Theorem \ref{thm:h4} and Corollary \ref{cor:h5-c}. %For the consistency of the paper, we give a brief introduction of the weak KAM theory in the Appendix.

\subsection*{Acknowledgements} 
This work is supported by the National Key R\&D Program of China (No. 2022YFA1007500) and the National Natural Science Foundation of China (No. 12231010). The author is grateful to Ya-Nan Wang for numerical assistance in  the examples. %The author also thanks Prof. Hung Vinh Tran and Prof. Zaijiu Shang for helpful discussions. 

\section{Variational interpretation of  solutions and the Comparison Principle}\label{s2}

\subsection{Weak KAM solutions of classical H-J equations}\label{s2.1}

This part is a brief introduction of the weak KAM theory. We assume the classical Hamiltonian $H:(x,p)\in T^*M\rightarrow\R$ is superlinear and convex in $p$ (for any contact Hamiltonian, by assigning a constant $a\in\R$ to the third component we instantly get such a Hamiltonian $H(\cdot,\cdot,a):T^*M\rightarrow\R$). The Fenchel's transformation gives us the Lagrangian $L:(x,v)\in TM\rightarrow\R$ by 
\[
L(x,v):=\max_{p\in T^*_xM}\{\langle v,p\rangle-H(x,p)\}.
\]
As in \eqref{eq:e-const}, a unique ergodic constant $c(H)\in\R$ (also called {\it Ma\~n\'e's critical value}) exists such that 
%\beq\label{eq:e-const}
%c(H)=\inf\{c\in\R: H(x,d_x\om)\leq c \text{ solvable in the sense of viscosity on $M$}\}.
%\eeq
%Due to \cite{FS}, we can find a viscosity solution for 
\beq\label{eq:hj}
H(x,d_x\om)=c(H),\quad x\in M
\eeq
has a solution.

\begin{defn}
A function $\om\in C(M,\R)$ is called a {\bf weak KAM solution} of \eqref{eq:hj}, if it satisfies:
\begin{itemize}
\item[1)] $\om\prec L+c(H)$, i.e. for any $x,y\in M$, $T\geq 0$ and absolutely continuous $\gamma:[0,T]\rightarrow M$ connecting $x$ to $y$, $\om(y)-\om(x)\leq\int_0^TL(\gamma,\dot\gamma)+c(H) dt$;
\item[2)] For any $x\in M$, there exists an absolutely continuous curve $\gamma:(-\infty, 0]\rightarrow M$ ending with it, such that 
\[
\om(x)-\om(\gamma(t))=\int_t^0L(\gamma(s),\dot\gamma(s))+c(H) ds,\quad\forall\, t\geq 0.
\]
Such a curve is called a {\bf backward calibrated curve} w.r.t. $\om$.
%\[
%\om(x)-\om(y)\leq\inf_{\substack{\gamma\in C([0,T],M),T\geq0\\\gamma(0)=y,\gamma(T)=x}}
%\]
\end{itemize}
\end{defn}

\begin{lem}\cite{CCIZ,F}\label{lem:sub}
1). $\om$ is a subsolution of \eqref{eq:hj} iff $\om(x)\prec L+c(H)$; 2) $\om$ is a viscosity solution of \eqref{eq:hj} iff $\om$ is a weak KAM solution.
\end{lem}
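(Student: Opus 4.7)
Both equivalences will be obtained by converting between the infinitesimal (test function) formulation of viscosity and the integral (domination / calibration) formulation, with the Fenchel inequality $\langle p,v\rangle\leq L(x,v)+H(x,p)$ (equality at $p=\partial_vL(x,v)$) serving as the bridge.

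For part 1), the domination $\Rightarrow$ subsolution direction is the easier one. Given any $C^1$ test function $\varphi$ such that $\om-\varphi$ attains a local maximum at $x_0$, I would test $\om\prec L+c(H)$ against a short curve $\gamma(t)=\exp_{x_0}(tv)$, subtract $\varphi(\gamma(t))-\varphi(x_0)$, divide by $t$, and let $t\downarrow 0$, obtaining $\langle d_{x_0}\varphi,v\rangle\leq L(x_0,v)+c(H)$; taking the supremum in $v$ and applying Fenchel yields $H(x_0,d_{x_0}\varphi)\leq c(H)$. For the converse, coercivity of $H$ forces any subsolution $\om$ to be Lipschitz, hence differentiable a.e.\ with $H(x,d_x\om)\leq c(H)$. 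One cannot naively apply the chain rule along an absolutely continuous $\gamma$ because $\gamma$ may lie in the null set of non-differentiability of $\om$, so I would regularize $\om$ via sup-convolution to obtain $\om_\eps\in C^{1,1}$ which is a subsolution of $H(x,d_x\om_\eps)\leq c(H)+o_\eps(1)$; along $\gamma$ the classical chain rule and Fenchel then give $\om_\eps(\gamma(T))-\om_\eps(\gamma(0))\leq\int_0^T L(\gamma,\dot\gamma)+c(H)+o_\eps(1)\,dt$, and letting $\eps\downarrow 0$ yields the domination.

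For part 2), a weak KAM solution is already a subsolution by part 1), so I only need the supersolution inequality. Given a $C^1$ test function $\varphi$ touching $\om$ from below at $x_0$ and a backward calibrated curve $\gamma:(-\infty,0]\to M$ with $\gamma(0)=x_0$, the calibration identity $\om(x_0)-\om(\gamma(-t))=\int_{-t}^0 L(\gamma,\dot\gamma)+c(H)\,ds$ combined with $\varphi(x_0)-\varphi(\gamma(-t))\leq\om(x_0)-\om(\gamma(-t))$ gives, after dividing by $t$ and sending $t\downarrow 0$, the inequality $\langle d_{x_0}\varphi,\dot\gamma(0)\rangle\geq L(x_0,\dot\gamma(0))+c(H)$, so by Fenchel $H(x_0,d_{x_0}\varphi)\geq c(H)$. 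Conversely, starting from a viscosity solution $\om$, I would use the Lax-Oleinik semigroup $T_t^-$: the subsolution property implies $\om\leq T_t^-\om+c(H)t$, and a local comparison against Tonelli minimizers gives the reverse, so $\om$ is a fixed point. For each $x_0$, choose Tonelli minimizers $\gamma_t:[-t,0]\to M$ realizing $T_t^-\om(x_0)$; superlinearity of $L$ yields uniform velocity bounds, and Arzel\`a--Ascoli combined with a Cantor diagonal argument in $t\to\infty$ extracts a backward calibrated curve.

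The main obstacle is the solution $\Rightarrow$ weak KAM direction, specifically the fixed-point property for $T_t^-$ and the concatenation of finite-time Tonelli minimizers into a global backward calibrated curve. Both steps depend on a comparison principle for the time-dependent equation $\partial_t w+H(x,d_xw)=c(H)$ together with uniform Lipschitz estimates on minimizers; these are standard when $H$ is a smooth Tonelli Hamiltonian, but in the present merely convex, continuous setting they require the more delicate arguments of \cite{CCIZ}. The sup-convolution regularization in part 1) is likewise technical for continuous (rather than Lipschitz) Hamiltonians but is by now classical.
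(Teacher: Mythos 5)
Your plan reproduces the standard weak KAM argument from the cited references \cite{F,CCIZ}; the paper does not prove this lemma itself. There is, however, a computational slip in the domination $\Rightarrow$ subsolution direction of part 1) that would make the step fail as written. With $\gamma(s)=\exp_{x_0}(sv)$ for $s\in[0,t]$, domination gives $\om(\gamma(t))-\om(x_0)\leq\int_0^t L(\gamma,\dot\gamma)+c(H)\,ds$, while the local maximum of $\om-\varphi$ at $x_0$ gives $\om(\gamma(t))-\om(x_0)\leq\varphi(\gamma(t))-\varphi(x_0)$: both are upper bounds on the same difference and do not chain into anything useful. You need a short curve \emph{ending} at $x_0$. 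Take $\eta(s)=\exp_{x_0}((s-t)v)$ for $s\in[0,t]$, so $\eta(t)=x_0$ and $\dot\eta\equiv v$; then the local-max inequality reads $\varphi(x_0)-\varphi(\eta(0))\leq\om(x_0)-\om(\eta(0))$, which combines with domination $\om(x_0)-\om(\eta(0))\leq\int_0^t L(\eta,\dot\eta)+c(H)\,ds$ to give, after dividing by $t$ and letting $t\downarrow 0$, the desired $\langle d_{x_0}\varphi,v\rangle\leq L(x_0,v)+c(H)$; taking the supremum over $v$ and Fenchel then give $H(x_0,d_{x_0}\varphi)\leq c(H)$.

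The rest of the proposal is sound and is essentially the route of Fathi for Tonelli Hamiltonians: sup-convolution regularization for the converse of part 1) (a Lipschitz-a.e. or Clarke-subdifferential argument also works and is perhaps more common in the convex setting), the calibration argument for the supersolution inequality (note that when $H$ is merely continuous you may need to pass to a subsequential limit velocity in place of $\dot\gamma(0)$, which is harmless because $L$ is convex), and the Lax--Oleinik fixed-point together with a diagonal extraction of Tonelli minimizers for the converse of part 2). Your caveat that the continuous convex setting requires the more delicate comparison and regularity estimates of \cite{CCIZ} is appropriate.
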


\begin{defn}[Mather measure \cite{Mn}]\label{defn:mat}
Let $\cP(TM,\R)$ be the space of all Borel probability measures on $TM$, then $\mu\in\cP(TM,\R)$ is called {\bf closed} if it satisfies:
\begin{itemize}
\item[1)] $\int_{TM}|v|_x d\mu(x,v)<+\infty$;
\item[2)] $\int_{TM}\langle v, d_x\phi\rangle d\mu(x,v)=0$ for any $\phi\in C^1(M,\R)$.
\end{itemize}
Denote by $\cC$ the set of all closed measures, then $\inf_{\mu\in\cC}\int_{TM} L(x,v)+c(H)d\mu\geq 0$. Consequently, any $\varrho\in\cC$ is called a {\bf Mather measure}, if $\int_{TM} L(x,v)+c(H)d\varrho=0$. Usually we denote by $\mathfrak M$ the set of all Mather measures.
\end{defn}

\begin{thm}\label{thm:mat-graph}
\begin{itemize}
\item \cite{Mat} If $H:T^*M\rightarrow \R$ is a $C^2-$smooth Tonelli Hamiltonian, then the {\bf Mather set} defined by 
\beq\label{eq:mat-set}
\wt\cM:=\bigg\{
\big(x,\partial_vL(x,v)\big)\in T^*M\bigg|(x,v)\in 
\overline{\bigcup_{\mu\in\mathfrak M}{\rm supp}(\mu)}\bigg\}
\eeq
is invariant with respect to the following {\bf Hamilton's equation}:
\beq\label{eq:h-ode}
\left\{
\beal
&\dot x=\partial_p H(x,p),\\
&\dot p=-\partial_x H(x,p).\
\enal
\right.
\eeq
Furthermore, $\pi^{-1}:\cM:=\pi(\wt\cM)\rightarrow \wt\cM$ is a Lipschitz diffeomorphism.
\item \cite{F}
 any solution $u(x)$ of \eqref{eq:hj} is differentiable on $\cM$, with $\wt\cM=\{(x,d_x u(x)):x\in\cM\}$.
 \end{itemize}
\end{thm}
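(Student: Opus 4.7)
The statement splits into two well-known pieces --- Mather's graph theorem for the invariant set, and Fathi's differentiability result for weak KAM solutions --- and I would treat them in order. For invariance of $\wt\cM$ under \eqref{eq:h-ode}, the plan is first to prove invariance under the Euler--Lagrange flow $\Phi_L^t$ on $TM$ of every Mather measure. One realizes $\mathfrak M$ as the set of closed probability measures minimizing $\int L+c(H)\,d\mu$ over the class $\cC$; perturbing a minimizer $\mu$ by a one-parameter family of compactly supported reparametrizations, and using closedness (condition 2 of Definition \ref{defn:mat}) together with the Tonelli bound $\int|v|\,d\mu<\infty$ to pass derivatives onto test functions, shows the Euler--Lagrange equation holds in the distributional sense and hence $(\Phi_L^t)_\#\mu=\mu$. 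Pushing forward by the Legendre transform $\cL(x,v)=(x,\partial_v L(x,v))$, which conjugates $\Phi_L^t$ with the Hamiltonian flow of \eqref{eq:h-ode}, transfers invariance to $\wt\cM$.

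For the graph property and the Lipschitz regularity of $\pi^{-1}$, the essential input is Mather's crossing lemma. If $(x_0,v_1),(x_0,v_2)$ both lie in $\overline{\bigcup_{\mu\in\mathfrak M}\mathrm{supp}(\mu)}$ with $v_1\ne v_2$, their orbits are action-minimizing for $L+c(H)$, so splicing short segments of the two orbits via short bridges manufactures a closed competitor with strictly smaller action (strict convexity of $L$ in $v$ giving a quantitative gain when velocities are averaged), contradicting the minimality built into $\mathfrak M$. A sharpened version of the same comparison gives a Lipschitz estimate on $\pi^{-1}:\cM\to\wt\cM$, with constant controlled by the convexity modulus of $L$ and the a priori velocity bound on the Mather set. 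For Fathi's conclusion, fix $x_0\in\cM$ with $(x_0,v_0)\in\mathrm{supp}(\mu)$ for some $\mu\in\mathfrak M$; Poincar\'e recurrence of $\Phi_L^t$ on $\mathrm{supp}(\mu)$ produces a backward orbit $\gamma$ through $x_0$ along which $u$ is calibrated in the sense of part 2) of the weak KAM definition. The dominance $u\prec L+c(H)$ from Lemma \ref{lem:sub} then forces both the super- and subdifferentials of $u$ at $x_0$ to coincide with the single covector $\partial_v L(x_0,v_0)$, so $u$ is differentiable at $x_0$ with $d_{x_0}u=\partial_v L(x_0,v_0)$; this simultaneously delivers the graph identity $\wt\cM=\{(x,d_xu(x)):x\in\cM\}$.

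The step I expect to be the main obstacle is the quantitative cut-and-paste that upgrades the crossing lemma into a genuine Lipschitz estimate. A naive comparison only rules out transverse crossings, so obtaining a Lipschitz --- not merely continuous --- inverse projection requires a second-variation style estimate exploiting positive-definiteness of $\partial^2_{vv}L$, together with uniform speed bounds on $\overline{\bigcup_{\mu\in\mathfrak M}\mathrm{supp}(\mu)}$ coming from the Tonelli hypothesis. Once that regularity is in place, invariance, differentiability of $u$ on $\cM$, and the graph identity all fall out cleanly, and the same scheme will adapt to the contact setting of Theorem \ref{thm:mat-set} by tracking the multiplier $-\partial_u H$ along the flow.
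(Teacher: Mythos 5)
This theorem is stated in the paper as recalled background, cited to Mather \cite{Mat} and Fathi \cite{F}; the paper gives no proof of its own, so your sketch is a reconstruction of the classical arguments rather than something to be matched against the text. The overall plan --- (i) invariance of minimizing closed measures under the Euler--Lagrange flow, transferred to $T^*M$ by the Legendre transform, (ii) the graph property and Lipschitz inverse via a crossing/cut-and-paste argument quantified by strict convexity of $L$ in $v$, and (iii) differentiability of a weak KAM solution on $\cM$ from calibration and domination --- is the right shape and matches the known proof path.

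There is one genuine gap in step (iii), in the line where you claim backward calibration together with $u\prec L+c(H)$ ``forces both the super- and subdifferentials to coincide.'' Backward calibration plus the subsolution property pins the \emph{super}differential: the touching function $\phi_t(x)=u(\gamma(t))+h_{-t}(\gamma(t),x)$ lies above $u$, equals $u$ at $x_0$, and is $C^1$ there with $d\phi_t(x_0)=\partial_vL(x_0,\dot\gamma(0))$, so $D^+u(x_0)=\{\partial_vL(x_0,\dot\gamma(0))\}$. But it does not give you $D^-u(x_0)\neq\emptyset$, and for a general viscosity supersolution the subdifferential may be empty at isolated points. To pin $D^-u(x_0)$ one needs a \emph{forward} calibrated arc issuing from $x_0$; that arc is what is special about the Mather set (the E--L orbit through $(x_0,\dot\gamma(0))$ is bi-infinitely calibrated, which follows from flow-invariance of $\wt\cM$ and the fact that ${\rm supp}(\mu)$ consists of semi-static orbits), not from part 2) of the weak KAM definition, which already grants a backward calibrated curve at \emph{every} point of $M$. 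The Poincar\'e-recurrence remark is pointing in the right direction but as written it only produces the backward branch, so the differentiability claim is not yet closed.

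A smaller terseness in step (i): deriving ``the Euler--Lagrange equation holds in the distributional sense'' for $\mu$ does not by itself yield $(\Phi_L^t)_\#\mu=\mu$; one still has to argue that the measure, not just its support, is transported by the flow. The standard shortcuts are either to adopt Mather's original definition in terms of flow-invariant probability measures and then prove Ma\~n\'e's theorem that the closed-measure and invariant-measure infima agree, or to use that the support of a minimizing closed measure lies in the Aubry set and is a disjoint union of bi-infinite calibrated orbits on which any closed minimizing measure must be the uniform time-average, hence invariant. Either route makes the inference airtight.
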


%Once we assume {\bf H1-H2}, the conjugation of the Hamiltonian $H$, the {\it Lagrangian function} $L:(x,v,u)\in M\times M\times\R\rightarrow\R$  could be defined by 
%\[
%L(x,v,u):=\max_{p\in M}\{\langle p, v\rangle-H(x,p,u)\},\quad\forall (x,v,u)\in M\times M\times\R.
%\]
\subsection{Variational interpretations of solutions for contact H-J equations}\label{s2.2}

As the preliminary, we first prove Proposition \ref{prop:mane} here (some parts can be found from \cite{Tr}, but for the consistency we prove here individually):\medskip

%\begin{prop}[ergodic constant]\label{prop:mane}
%Assume {\bf H1-H3}. For any $a\in\R$, there exists a unique $c(a)\in\R$ presenting as the ergodic constant of the Hamiltonian $H(\cdot,\cdot,a)$. Moreover, the following properties of $c(a)$ can be drawn:
%\begin{itemize}
%\item[(1)] $c:\R\rightarrow\R$ is continuous and non-decreasing;
%\item[(2)] Denote by $\mathfrak C:=\{c(a)|a\in\R\}$, then for any $c\in\mathfrak C$, the associated equation \eqref{eq:hjc} has a solution. 
%%\item[(iii)] If {\bf H5} is assumed, then $c:\R\rightarrow\R$ is convex.
%\end{itemize}
%\end{prop}
\noindent{\it Proof of Proposition \ref{prop:mane}:}

1). Due to {\bf H3}, for any $a<b$ 
\[
H(x,d_x u_b,a)\leq H(x,d_x u_b,b)\leq c(b),\quad {\rm a.e. } \;x\in M
\]
where $u_b$ is a solution of $H(\cdot,\cdot,b)$, therefore, $c(a)$ can't be greater than $c(b)$ since $c(a)$ has to be the minimal value in $\R$ such that $H(\cdot,\cdot,a)$
 has a subsolution. For any  sequence $\R\ni a_n\to a$ as $ n\rightarrow+\infty$, we can find a sequence of solutions $\{u_n\}_{n\in\N}$ with $u_n(0)\equiv 0$ for any $n\in\N$. Due to Lemma \ref{lem:sub}, $\{u_n\}_{n\in\N}$ are uniformly Lipschitz, then uniformly bounded. Suppose $c_*$ is an accumulating point of $c(a_n)$ as $n\rightarrow+\infty$, then the associated subsequence of $\{u_n\}_{n\in\N}$ also has an accumulating function $\om$ as $n\rightarrow +\infty$, which is exactly a solution of  
\[
H(x, d_x\om, a)=c_*,\quad x\in M.
\]
Due to \cite{FS}, such a $c_*\in\R$ is unique, so $c_*=c(a)$ and $\lim_{n\rightarrow+\infty} c(a_n)=c(a)$ follows. 

2). For any $c\in\mathfrak C$, there must exist a $a\in\R$ and a solution $u_c$ of 
\[
H(x, d_xu_c,a)=c,\quad x\in M.
\]
Observe that $u_c-\max_Mu_c+a$ (resp. $u_c-\min_Mu_c+a$) has to be a subsolution (resp. supersolution) of \eqref{eq:hjc}, Due to the {\it Perron's method} (see \cite{Is} for instance), there must be a solution $u$ of \eqref{eq:hjc} such that $u_c-\max_Mu_c+a\leq u\leq u_c-\min_Mu_c+a$.
%(iii). Since {\bf H5} is assumed, it suffices to show that for any $a<b$, there holds 
%\[
%c(\lb a+(1-\lb)b)\leq \lb c(a)+(1-\lb)c(b),\quad\forall\,\lb\in[0,1].
%\] 
%Indeed, due to \eqref{eq:e-const}, if there exists a subsolution $\om_a$ (resp. $\om_b$) of 
%\[
%H(x,d_x\om_a(x), a)\leq c(a)\quad {\rm (resp.\;\;} H(x,d_x\om_b(x), b)\leq c(b){\rm )}, 
%\]
%then for any $\lb\in[0,1]$
%\ben
%& &H(x,\lb d_x\om_a(x)+(1-\lb)\om_b(x), \lb a+(1-\lb)b)\\
%&\leq& \lb H(x,d_x\om_a(x), a) +(1-\lb)H(x,d_x\om_b(x), b)\\
%&\leq&  \lb c(a)+(1-\lb) c(b),\quad \quad {\rm a.e.}\; x\in M.
%\een
% Once again by \eqref{eq:e-const}, we conclude that $\lb c(a)+(1-\lb) c(b)\geq c(\lb a+(1-\lb) b)$.
 \qed\medskip

For any $c\in\mathfrak C$, we can always find a solution $u\in C(M,\R)$ of the associated \eqref{eq:hjc}. Denote by 
\[
{\bf L}^u(x,v):=L(x,v, u(x))\quad {\rm (\, resp.}\; {\bf H}^u(x,p):=H(x,p, u(x)) \,{\rm )}
\]
 for any $(x,v)\in TM$ (resp. $(x,p)\in T^*M$), then $u\prec{\bf L}^u+c$ due to item 1 of Lemma \ref{lem:sub}, which further indicates $u\in {\rm Lip}(M,\R)$. Moreover, $c$ is also the ergodic constant of \eqref{eq:hj} associated with the Hamiltonian ${\bf H}^u(x,p)$. So we conclude:
\begin{prop}[Dynamical Programming]\label{prop:dp}
Assume {\bf H1-H3}. Suppose $u\in C(M,\R)$ is a solution of \eqref{eq:hjc}, then 
\beq\label{eq:dp}
u(x)=\inf_{\substack{\gamma\in C^{ac}([T,0],M)\\\gamma(0)=x}}\Big(u(\gamma(T))+\int_T^0L(\gamma,\dot\gamma, u(\gamma))+c dt\Big),\quad\forall\, T\leq 0.
\eeq
Moreover, there exists a Lipschitz  curve $\gamma_x:[-\infty,0]\rightarrow M$ ending with $x$, such that 
\[
u(x)-\ u(\gamma_x(T))=\int_T^0L(\gamma_x,\dot\gamma_x, u(\gamma_x))+c dt,\quad \forall\, T\leq 0.
\]
\end{prop}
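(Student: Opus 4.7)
The strategy is to freeze the $u$-dependence so that the classical weak KAM theory of Section \ref{s2.1} applies directly. Given the solution $u$, I would set $H^u(x,p):=H(x,p,u(x))$ with Fenchel dual $L^u(x,v)=L(x,v,u(x))$. Since $u$ is continuous, $H^u:T^*M\to\R$ inherits from {\bf H1}--{\bf H2} continuity in $x$, convexity in $p$, and fiberwise superlinearity, so it fits into the framework of Section \ref{s2.1}.

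\medskip

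The first step is to observe that $u$ is a viscosity solution of the classical H-J equation $H^u(x,d_x\omega)=c$. This is tautological from the definitions: for any $C^1$ test function $\phi$ with $u-\phi$ achieving a local max (resp.\ min) at $x_0$, the subsolution (resp.\ supersolution) inequality $H(x_0,d_{x_0}\phi,u(x_0))\leq c$ (resp.\ $\geq c$) is nothing other than $H^u(x_0,d_{x_0}\phi)\leq c$ (resp.\ $\geq c$). Uniqueness of the ergodic constant then forces $c=c(H^u)$.

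\medskip

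Next I would invoke Lemma \ref{lem:sub}: $u$ is a weak KAM solution of $H^u(x,d_x\omega)=c$. Unpacking the definition yields two ingredients. The first is the domination $u\prec L^u+c$, which for any absolutely continuous $\gamma:[T,0]\to M$ with $\gamma(0)=x$ gives
\[
u(x)-u(\gamma(T))\leq\int_T^0 L(\gamma,\dot\gamma,u(\gamma))+c\,dt.
\]
The second is the existence of a backward calibrated curve $\gamma_x:(-\infty,0]\to M$ ending at $x$, along which the above inequality is an equality for every $T\leq 0$. The domination shows that $u(x)$ is a lower bound for the right-hand side of \eqref{eq:dp}, while the calibrated curve exhibits $\gamma_x|_{[T,0]}$ attaining this bound; together they give \eqref{eq:dp}. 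The Lipschitz regularity of $\gamma_x$ follows from the standard a priori bound on calibrated curves, exploiting the superlinearity of $L^u$.

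\medskip

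I do not foresee a serious obstacle: the contact nonlinearity is entirely absorbed into the frozen Hamiltonian $H^u$, and everything reduces to classical weak KAM facts. The only mild technical point is that $H^u$ is merely continuous in $x$, so one must rely on the continuous-Hamiltonian weak KAM theory of \cite{CCIZ,FS} already cited in Section \ref{s2.1} rather than Fathi's smooth Tonelli theory; this is precisely what Lemma \ref{lem:sub} encodes, so no additional work is required beyond the freezing construction.
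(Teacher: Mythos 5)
Your proposal is correct and takes essentially the same route as the paper: the paper also freezes the $u$-dependence into ${\bf H}^u(x,p):=H(x,p,u(x))$, observes that $u$ is a viscosity (hence weak KAM) solution of ${\bf H}^u(x,d_x\omega)=c$, and reads off \eqref{eq:dp} together with the backward calibrated curve from Lemma \ref{lem:sub}. Your remarks about the tautological transfer of the viscosity property and about needing the continuous-Hamiltonian weak KAM framework accurately capture the (implicit) content of the paper's argument.
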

%\proof
%This is a direct citation of Lemma 2.6 of \cite{CFZZ}.
%\qed

\subsection{Two types of Mather measures}\label{s2.3}

As we know, any  solution $u$ of \eqref{eq:hjc} has to be a weak KAM solution associated with ${\bf H}^u$. Then for any $x\in M$, there exists a backward calibrated (w.r.t. $u$) curve $\gamma_x:(-\infty,0]\rightarrow M$ ending with it, such that 
\[
u(\gamma_x(b))-\ u(\gamma_x(a))=\int_a^bL(\gamma_x,\dot\gamma_x, u(\gamma_x))+c \ dt,\quad \forall\, a\leq b\leq 0.
\]
Due to  Proposition \ref{prop:dp}, such a curve $\gamma_x$ is Lipschitz. Furthermore, for any $a< b\leq 0$, we can define a probability measure $\mu_x^{a,b}$ by 
\[
\int_{TM} f(x,v) d\mu_x^{a,b}(x,v):=\frac{1}{b-a}\int_a^b f(\gamma_x(t),\dot\gamma_x(t)) dt,\quad\forall\, f\in C_c(TM,\R).
\]
As $b-a\rightarrow+\infty$, any accumulating measure $\mu$ (in the sense of weak topology) of $\mu_x^{a,b}$ has to be a closed measure and 
\[
\int_{TM} L(x,v, u(x)) +c\ d\mu(x,v)=0.
\]
Denote by $\mathfrak M(u)$ the set of all such accumulating measures, then it's actually a subset of the Mather measures associated with ${\bf H}^u$ (see Definition \ref{defn:mat}) .\medskip

On the other side, for any $c\in\mathfrak C$, there exists a connected interval $I(c)$, such that $c$ is the Ma\~n\'e critical value of ${\bf H}^\theta$ for any $\theta\in I(c)$. 
Denote by 
\[
\mathfrak M^\theta:=
\{\text{all Mather measures associated with }{\bf H}^\theta\},
\]
 the following conclusion can be easily drawn:
\begin{lem}\label{lem:mono-mea}
Assume {\bf H1-H3}. For any $\theta_1\leq \theta_2$ contained in $I(c)$, any Mather measure $\mu\in\mathfrak M^{\theta_1}$ has to be contained in  $\mathfrak M^{\theta_2}$.
\end{lem}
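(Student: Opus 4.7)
The plan is to reduce the claim to a simple sandwich inequality built from Fenchel duality and the variational characterization of Mather measures. The monotonicity assumption \textbf{H3} on the contact Hamiltonian translates, under Legendre transform, into a reversed monotonicity on the associated Lagrangians. This allows us to compare the action integrals against a common ergodic constant, which is the same for $\theta_{1}$ and $\theta_{2}$ by the definition of $I(c)$.

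First I would record the Lagrangian comparison: for each $\theta$ set ${\bf L}^{\theta}(x,v)=\max_{p}\{\langle v,p\rangle-H(x,p,\theta)\}$. Since $H(x,p,\theta_{1})\leq H(x,p,\theta_{2})$ pointwise by \textbf{H3}, taking the Fenchel transform reverses the order and yields ${\bf L}^{\theta_{1}}(x,v)\geq {\bf L}^{\theta_{2}}(x,v)$ for every $(x,v)\in TM$. Next, recall from Definition~\ref{defn:mat} that $\mathfrak M^{\theta}$ consists of closed probability measures $\mu$ realizing $\int_{TM}{\bf L}^{\theta}+c\,d\mu=0$, and that the infimum of this functional over the full class $\mathcal{C}$ of closed measures is $0$ (the Ma\~n\'e critical value characterization). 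Crucially, since $\theta_{1},\theta_{2}\in I(c)$ the constant $c$ is simultaneously the ergodic constant of ${\bf H}^{\theta_{1}}$ and ${\bf H}^{\theta_{2}}$, so both variational problems have infimum $0$.

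Take $\mu\in\mathfrak M^{\theta_{1}}$. Since $\mu$ is a closed measure and $c$ is the ergodic constant of ${\bf H}^{\theta_{2}}$, one has the lower bound
\[
\int_{TM}{\bf L}^{\theta_{2}}(x,v)+c\,d\mu(x,v)\;\geq\;0.
\]
Combining with the pointwise inequality ${\bf L}^{\theta_{2}}\leq {\bf L}^{\theta_{1}}$ and the Mather property of $\mu$ for ${\bf H}^{\theta_{1}}$ gives
\[
0\;\leq\;\int_{TM}{\bf L}^{\theta_{2}}+c\,d\mu\;\leq\;\int_{TM}{\bf L}^{\theta_{1}}+c\,d\mu\;=\;0,
\]
so both inequalities are equalities. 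Hence $\int{\bf L}^{\theta_{2}}+c\,d\mu=0$, and since $\mu$ is already closed, it qualifies as a Mather measure for ${\bf H}^{\theta_{2}}$, i.e.\ $\mu\in\mathfrak M^{\theta_{2}}$.

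There is essentially no obstacle here: the argument uses only \textbf{H3} via Fenchel duality, the fact that $c$ is unchanged across $I(c)$, and the minimum characterization of Mather measures stated in Definition~\ref{defn:mat}. The only point worth double-checking is that the closedness of $\mu$ depends solely on the measure (not on the Hamiltonian), so nothing needs to be re-verified when switching from $\theta_{1}$ to $\theta_{2}$.
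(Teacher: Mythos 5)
Your proposal is correct and follows essentially the same route as the paper: both arguments sandwich $\int_{TM}[{\bf L}^{\theta_2}+c]\,d\mu$ between $0$ (the lower bound valid for any closed measure, since $c$ is also the ergodic constant for $\theta_2\in I(c)$) and $\int_{TM}[{\bf L}^{\theta_1}+c]\,d\mu=0$ (using the pointwise Lagrangian comparison coming from \textbf{H3} via Fenchel duality). The only difference is that you spell out the Fenchel-transform step and the closedness remark more explicitly than the paper does.
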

\proof
Due to {\bf H3}, 
\[
0=\int_{TM}[L(x,v,\theta_1)+c]d\mu(x,v)\geq \int_{TM}[L(x,v,\theta_2)+c]d\mu(x,v)\geq 0
\]
which urges $\mu$ to be a Mather measure associated with ${\bf H}^{\theta_2}$. Consequently,  for any $(x,v)\in {\rm supp}(\mu)$ with $\mu\in\mathfrak M^{\theta_1}$, we have $L(x,v,\theta_1)=L(x,v,\theta_2)$ once $\theta_2\geq \theta_1$ and is contained in $I(c)$.
\qed
\begin{rmk}
Theoretically, $\mathfrak M^{\sup I(c)}$ contains the most Mather measures than any other $\mathfrak M^{\theta\in I(c)}$, so we can denote by $
\mathfrak M(c):=\mathfrak M^{\sup I(c)}$. Such a set does not depend on any information of solutions of \eqref{eq:hjc}, whereas $\mathfrak M(u)$ does. 
\end{rmk}

\begin{defn}[ordinal Mather measure]\label{defn:t-mea}
Suppose $u$ is a solution of \eqref{eq:hjc} and $\partial_u L(x,v,u(x))$ exists for any $(x,v)\in{\rm supp}(\mu)$ with $\mu\in\mathfrak M(u)$, then we denote by 
\beq\label{eq:u-tint}
\mathfrak M_-(u):=\Big\{\mu\in\mathfrak M(u)\Big|\int_{TM}\partial_uL(x,v, u(x))d\mu(x,v)=  0\Big\}.
\eeq
Similarly, suppose $\partial_u L(x,v,\theta)$ exists for some $\theta\in I(c)$ and $(x,v)\in{\rm supp}(\mu)$ with $\mu\in\mathfrak M(c)$, we denote by 
\beq\label{eq:c-tint}
\mathfrak M_-^\theta:=\Big\{\mu\in\mathfrak M^\theta\Big|\int_{TM}\partial_uL(x,v, \theta)d\mu(x,v)= 0\Big\}.
\eeq
\end{defn}
As a complement, we point out in above definition the existence of $\partial_u L$ in \eqref{eq:u-tint} and \eqref{eq:c-tint} can be guaranteed by {\bf H4} and {\bf H4'} respectively. A standard argument based on the Fenchel's conjugacy between $L$ and $H$ can prove that, see e.g. Lemma 4.1 of \cite{Ch}.

\subsection{Proof of the comparison principle} At first, we propose the following technical Lemma which will be used in the proof of Theorem \ref{thm:l-cp} and Theorem \ref{thm:g-cp}. Such a Lemma was originally proved in Theorem 4.1 of \cite{CFZZ}> For the consistency of the paper, we briefly sketch the proof for it.

\begin{lem}\label{lem:lya}
%Suppose $u(x)\prec L(x,v)+c$,  and $\gamma:[a,b]\rightarrow M$ is calibrated by $\om\in C(M,\R)$ (may not be a dominated function), then $u(\gamma(t))-\om(\gamma(t))$ is nonincreasing in $t\in[a,b]$.
Suppose that $u:M\to\R $ is a subsolution of 
$${\bf H}^\varphi(x,d_xu):=H(x,d_x u,\varphi(x))=c$$
and $w:M\to\R $ is a solution of 
$${\bf H}^\psi(x,d_xw):=H(x,d_x w,\psi(x))=c,$$
for certain continuous functions $\varphi$ and $ \psi$,  then one of the following two holds:
\begin{enumerate}
\item[(1)] The maximum of $u-w$ can be attained at a point $x_{\max}\in M$ where $\varphi(x_{\max})-\psi(x_{\max})\leq 0$.
\item[(2)] We can find a Lipschitz curve $\gamma : t\in (-\infty,0]\to M$ which is both $u$-calibrated for 
${\bf L}^\varphi+c$ and $w$-calibrated for ${\bf L}^\psi+c$, such that for all $t\in(-\infty,0]$,
\begin{align*}
u(\gamma(t))- w(\gamma(t))=\max(u-w), \qquad \varphi(\gamma(t))-\psi(\gamma(t))>0.
\end{align*}
\end{enumerate}
Moreover,  if (1) does not hold, denoting by $K<+\infty$ a Lipschitz constant 
for the Lipschitz curve $\gamma:(-\infty,0]\to M$ obtained in (2),
we can find
a closed measure $ \mu\in\cC$ of which ${\rm supp} (\mu)$ is contained in the compact subset $\{(x,v)\in TM \mid | v|_x\leq K\}$,
such that
$$\int_{TM}\big[ L(x,v, \varphi(x))+c\big]\,d\mu(x,v)=\int_{TM} \big[L(x,v, \psi(x))+c\big]\,d\mu(x,v)=0$$
and for all $(x,v)\in {\rm supp} (\mu)$,
$$u(x)-w(x)=\max_M(u-w),\ \  \varphi(x)-\psi(x) > 0.$$
 \end{lem}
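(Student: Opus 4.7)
The plan is to assume that alternative (1) fails and then construct both the curve claimed in (2) and the closed measure. So suppose every maximizer $x^*$ of $u - w$ on $M$ satisfies $\varphi(x^*) > \psi(x^*)$, and write $m := \max_M (u-w)$. First I fix any such maximizer $x_0$ and apply Proposition \ref{prop:dp} to the solution $w$ of ${\bf H}^\psi(x,d_xw) = c$ to produce a Lipschitz backward-calibrated curve $\gamma : (-\infty, 0] \to M$ for ${\bf L}^\psi + c$ with $\gamma(0) = x_0$. Its Lipschitz constant $K$ is controlled only by the $C^0$-bound of $\psi$ on $M$ and by $c$ (via superlinearity and the usual weak-KAM estimate).

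Combining the domination $u \prec {\bf L}^\varphi + c$ (Lemma \ref{lem:sub}) with the exact calibration of $w$ along $\gamma$ yields, for all $t \leq 0$,
\[
m - (u-w)(\gamma(t)) \;\leq\; \int_t^0 \bigl[L(\gamma,\dot\gamma,\varphi(\gamma)) - L(\gamma,\dot\gamma,\psi(\gamma))\bigr]\, ds.
\]
Since $H$ is non-decreasing in $u$ by {\bf H3}, its Fenchel conjugate $L$ is non-increasing in $u$, so on any subinterval of $(-\infty, 0]$ where $\varphi \geq \psi$ the integrand is non-positive, forcing the nonnegative left-hand side to vanish and hence $(u-w)\circ\gamma \equiv m$ there. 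At $t=0$ we have $\varphi(x_0) > \psi(x_0)$, so by continuity of $\varphi,\psi,\gamma$ the strict inequality persists on a one-sided neighborhood of $0$, and the above equality holds near $t = 0$.

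To propagate this to the whole half-line I set
\[
T^* := \inf\bigl\{T \leq 0 \,:\, (u-w)(\gamma(s)) = m \text{ for all } s \in [T, 0]\bigr\}.
\]
If $T^* > -\infty$, continuity forces $(u-w)(\gamma(T^*)) = m$, so $\gamma(T^*)$ is a maximizer, whence failure of (1) gives $\varphi(\gamma(T^*)) > \psi(\gamma(T^*))$; continuity then extends the equality past $T^*$, contradicting its definition. Hence $T^* = -\infty$, and along the entire curve both $(u-w)(\gamma) = m$ and $\varphi(\gamma) > \psi(\gamma)$ hold. Tracing the above inequalities back to equalities then shows that $u$ is itself calibrated for ${\bf L}^\varphi + c$ along $\gamma$, establishing (2).

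For the measure I form the empirical averages $\mu_T := \frac{1}{T}\int_{-T}^0 \delta_{(\gamma(t),\dot\gamma(t))}\, dt$, all supported in $\{(x,v)\in TM : |v|_x \leq K\}$. Any weak-$*$ accumulation point $\mu$ along a sequence $T_n \to +\infty$ is a closed probability measure: for any $\phi \in C^1(M, \R)$, $\int \langle v, d_x\phi\rangle\, d\mu = \lim_n \bigl(\phi(\gamma(0)) - \phi(\gamma(-T_n))\bigr)/T_n = 0$. Passing both calibration identities (for $u$ and $w$) to the limit, using boundedness of $u$ and $w$ on compact $M$, yields $\int[L(x,v,\varphi(x))+c]\,d\mu = \int[L(x,v,\psi(x))+c]\,d\mu = 0$. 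Finally, $\pi({\rm supp}(\mu)) \subseteq \overline{\gamma((-\infty, 0])}$; on this closure $u-w = m$ by continuity and every point is a maximizer, so by failure of (1) we get $\varphi > \psi$ on the support. The main technical obstacle is the bootstrapping step propagating $u - w = m$ along the entire backward semi-trajectory: it is there that the strict inequality $\varphi > \psi$ at maximizers (the negation of (1)) must interlock with the monotonicity of $L$ in $u$ (from {\bf H3}) to prevent any leakage of the equality as $t$ decreases.
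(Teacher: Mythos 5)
Your proof is correct and follows essentially the same route as the paper: starting from a maximizer of $u-w$, pull back a $w$-calibrated curve and use \textbf{H3} (monotonicity of $L$ in $u$) to propagate both $u-w=\max(u-w)$ and the $u$-calibration backwards along it, then extract the closed measure by Krylov--Bogolyubov time averages. The only cosmetic difference is the choice of stopping time: you define $T^*$ as the first time $(u-w)\circ\gamma$ drops below the max, while the paper defines $t_0$ as the first time $\varphi\circ\gamma-\psi\circ\gamma$ ceases to be positive, and the paper derives the dichotomy directly rather than arguing by contraposition from the failure of (1); these are equivalent bookkeeping choices and the bootstrap argument is the same.
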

 
\begin{proof}
Pick $x_0\in M$ such that
$u(x_0)- w(x_0)=\max_M(u- w)$. If $\varphi(x_0)-\psi(x_0)\leq 0$, then alternative (1) holds.

Suppose now that $\varphi(x_0)-\psi(x_0)> 0$. Since $w$ is a solution of 
${\bf H}^\psi(x,d_xw)=c$, we can find a w-calibrated curve  $\gamma:\,(-\infty,0]\to M$ ending with $x_0$, which is Lipschitz with a constant $K<+\infty$ due to Lemma \ref{lem:sub}.  In particular,
\begin{equation}\label{inftycalibrationv}
w(x_0)-w(\gamma(t))=
\int_t^0L(\gamma(s),\dot\gamma(s), \psi(\gamma(s)))+c\,d s,\;\;\forall \; t\leq 0.
\end{equation}
Since $\varphi(x_0)-\psi(x_0)> 0$, we can define $t_0\in (-\infty,0]\cup\{-\infty\}$ by
\begin{equation}\label{deft0}t_0=\inf\{t\leq 0\mid \varphi(\gamma(s))-\psi(\gamma(s))>0,\text{ for all $s\in[t,0]$}\}.
\end{equation}
By continuity of $\varphi\circ \gamma-\psi\circ\gamma$, there hold $t_0<0$ and $(\varphi-\psi)(\gamma(t_0))=0$ if $t_0$ is finite.

\underline{\bf Claim:} We have
\begin{equation}\label{Constance1gen}
u(x_0)- w(x_0)=u(\gamma(t))- w(\gamma(t)),\quad\forall \,t\in(t_0,0],
\end{equation}
and $\gamma: (t_0,0] \to M$ is not only 
 $w$-calibrated but also $u$-calibrated.
 Moreover
\begin{equation}\label{Constance2gen}
L\bigl(\gamma(t),\dot\gamma(t), \varphi(\gamma (t))\bigr)=L\bigl(\gamma(t),\dot\gamma(t), \psi(\gamma (t))\bigr),
\quad {\rm a.e.}\; t\in(t_0,0].
\end{equation}

We now prove the Claim.
In fact by condition {\bf H3}, we obtain
\begin{equation}\label{Momogen}
L\bigl(\gamma(s),\dot\gamma(s), \varphi(\gamma (s))\bigr)\leq L\bigl(\gamma(s),\dot\gamma(s), \psi(\gamma (s))\bigr),\quad {\rm a.e.}\;
s\in(t_0,0].
\end{equation}
Hence for any $t\in (t_0,0]$, we get 
\be\label{BunchInegen}
 u(x_0)-u(\gamma(t))&\leq& \int_{t}^0\big[L\bigl(\gamma(s),\dot\gamma(s), \varphi(\gamma (s))\bigr)+c\big]\,d s\nonumber\\
&\leq& \int_{t}^0\big[L\bigl(\gamma(s),\dot\gamma(s), \psi(\gamma (s))\bigr)+c\big]\,d s\\ 
&=& w(x_0)-w(\gamma(t)).\nonumber
\ee
Recall that  $u(x_0)- w(x_0)=\max_M(u- w)$, which implies 
 $ u(x_0)-u(\gamma(t))=w(x_0)-w(\gamma(t))$ and forces all inequalities
 in \eqref{BunchInegen} to be equality. Consequently, the curve $\gamma: (t_0,0] \to M$   
 is also $u$-calibrated and
 \begin{equation*}
 w(x_0)-w(\gamma(t))=u(x_0)-u(\gamma(t)), \, \textup{for all $t\in(t_0,0]$},
 \end{equation*}
so it verifies equality \eqref{Constance1gen} then \eqref{Constance2gen}, 
which finally establishes the Claim.\medskip

$\bullet$ If $t_0$ is finite, then by continuity $\max_M(u- w)=u(x_0)- w(x_0)=u(\gamma(t_0))- w(\gamma(t_0))$
and $(\varphi-\psi)(\gamma(t_0))=0$ due to \eqref{deft0}. Alternative (1) holds in that case.

$\bullet$ If $t_0=-\infty$, then the curve 
$\gamma: (-\infty,0]\to M$ satisfies alternative (2). If so, the time average w.r.t. $t>0$ gives us a probability measure $\mu_t$ by 
$$\int_{TM}f(x,v)\,d \mu_t:=\frac1t\int_{-t}^0f(\gamma(s),\dot\gamma(s))\,d s,\quad  \forall\;f\in C_c(TM,\R).$$
Since $ \gamma$ is $K-$Lipschitz,  supp$( \mu_t)\subset \{(x,v)\in TM: |v|\leq K\}$. So we can choose a sequence $t_n\to +\infty$ such that  
$ \mu_{t_n}\to \mu$ in the weak topology. Since $M$ is compact and $\gamma$ is both $u-$calibrated and $w-$calibrated, we can easily prove that  $\mu$ is a closed measure and satisfies
\ben
	\int_{TM} \big[L(x,v, \psi(x))+c\big]\,d\mu(x,v)
=\int_{TM} \big[L(x,v, \varphi(x))+c\big]\,d\mu(x,v)=0.
\een
Since any $(x,v)\in{\rm supp}(\mu)$ should be a $\alpha-$limit point of $\gamma$, then 
$u(x)-w(x)=\max_M(u-w)$ and $\varphi(x)-\psi(x)>0$ away from alternative (1). This finally finishes the proof. 
\end{proof}

\bigskip

\noindent{\it Proof of Theorem \ref{thm:l-cp}:}
%Similar with the proof of Theorem \ref{thm:g-cp}, 
1). Due to Lemma \ref{lem:lya}, we conclude that either $u_1-u_2$ attains the maximum at a point $x_{\max}$ such that $u_1(x_{\max})\leq u_2(x_{\max})$, or $u_1-u_2$ attains a positive maximum on the set $\pi({\rm supp}(\mu))\subset M$ for some closed measure  $\mu$ such that 
\[
\int_{TM}\big[ L(x,v, u_1(x))+c\big]\,d\mu(x,v)=\int_{TM} \big[L(x,v, u_2(x))+c\big]\,d\mu(x,v)=0.
\]
 Since alternative 1 coincides with our assertion, so it remains to consider alternative 2.
Due to {\bf H3}, $L(x,v,r)=L(x,v, u_1(x))$ for all $(x,v)\in{\rm supp}(\mu)$ and $r\in[u_2(x), u_1(x)]$. Moreover, from the construction of $\mu$ in Lemma \ref{lem:lya}, $\mu\in\mathfrak M(u_1)\cap\mathfrak M(u_2)$. Therefore, due to {\bf H4}, there holds
\[
\partial_uL(x,v, r)=0,\quad\forall\,(x,v)\in{\rm supp}(\mu),\; r\in[u_2(x), u_1(x)]
\]
which further indicates $\mu\in\mathfrak M_-(u_1)$ and then \eqref{eq:cp-mea-2}. However, that contradicts with the premise of alternative 2, i.e., $u_1-u_2>0$ on $\pi({\rm supp}(\mu))$, so our assertion follows. 

2). As a corollary of 1), for any other solution $\wt u$ of \eqref{eq:hjc} different from $u$, $\mathfrak M_-(u)=\emptyset$ ensures that $u\leq \wt u$, so $u$ has to be the minimal solution. If $\wt u-u>0$ at some point $x_0\in M$, then due to the proof of 1), we conclude that $\wt u-u$ attains the positive maximum on the set supp$(\mu)$, where $\mu\in\mathfrak M(\wt u)\cap\mathfrak M(u)$. Therefore, $\mu\in\mathfrak M_-(u)$ which contradicts with the assumption $\mathfrak M_-(u)=\emptyset$. That implies $\wt u=u$.
\qed
\bigskip

\noindent{\it Proof of Theorem \ref{thm:g-cp}:}
1). %Suppose $c\in\mathfrak C$ and $\theta\in I(c)$, then \[ H(x,d_x\om, \theta)=c\] admits (at least) one solution $\om\in C(M,\R)$. 
Similarly with Theorem \ref{thm:l-cp}, we directly assume that 
%Due to Lemma \ref{lem:lya}, we conclude that either $u_1-u_2$ attains the maximum at a point $x_{\max}$ such that $u_1(x_{\max})\leq u_2(x_{\max})$, or 
$u_1-u_2$ attains a positive maximum on the set $\pi({\rm supp}(\mu))\subset M$ for some closed measure  $\mu$ satisfying
\[
\int_{TM}\big[ L(x,v, u_1(x))+c\big]\,d\mu(x,v)=\int_{TM} \big[L(x,v, u_2(x))+c\big]\,d\mu(x,v)=0.
\]
If so,
 \[
 L(x,v,r)=L(x,v, u_1(x)),\quad\forall\; (x,v)\in{\rm supp}(\mu), \; r\in [u_2(x), u_1(x)]\subset\R
 \]
 due to {\bf H3}. Furthermore, due to {\bf H5} we know that $L(x,v,u)$ is concave in $u$, so 
 \[
 \max_{u\in\R} L(x,v,u)=L(x,v, u_1(x))=L(x,v,r),\quad \forall\; (x,v)\in{\rm supp}(\mu),\; r\leq u_1(x).
 \]
Consequently, we can find a $r_c<\min \{\theta,\min_{x\in M} u_1(x)\} $ such that 
\ben
\int_{TM}\big[ L(x,v, r_c)+c\big]\,d\mu(x,v)=\int_{TM}\big[ L(x,v, \theta)+c\big]\,d\mu(x,v)=0
\een
for any fixed $\theta\in I(c)$, which indicates $\mu\in\mathfrak M^\theta$. However, due to {\bf H4'} previous equality implies that 
\[
\partial_u L(x,v,\theta)=0,\quad \forall\;  (x,v)\in{\rm supp}(\mu),
\]
so $\mu\in\mathfrak M_-^\theta$ which contradicts with \eqref{eq:cp-mea}.
%along a Lipschitz curve $\gamma:(-\infty,0]\rightarrow M$ which is calibrated by both ${\bf L}^{u_1}+c$ and ${\bf L}^{u_2}+c$. Since alternative 1 coincides with our assertion, so it remains to consider alternative 2. If so, there holds 
%\ben
% u_1(\gamma(0))-u_1(\gamma(t))
% &=&\int_{t}^0\big[L\bigl(\gamma(s),\dot\gamma(s), u_1(\gamma (s))\bigr)+c\big]\,d s\\
% &=&\int_{t}^0\big[L\bigl(\gamma(s),\dot\gamma(s), u_2(\gamma (s))\bigr)+c\big]\,d s\\
% &=& u_2(\gamma(0))-u_2(\gamma(t))
%\een
%for any $t<0$.

%\noindent{\it Proof of Corollary \ref{cor:l-cp}:}
% Recall that {\bf H3} ensures the existence of at least one solution to \eqref{eq:hjc}, due to the Perron's method (e.g. see \cite{Is}). For any other solution $\wt u$ of \eqref{eq:hjc} different from $u$, $\mathfrak M_-(u)=\emptyset$ ensures that $u\leq \wt u$, so $u$ has to be the minimal solution. Since  $\mathfrak M_-(u_1)=\mathfrak M_-(u_2)=\emptyset$, due to Theorem \ref{thm:l-cp}
%$u_1\leq u_2$ and $u_2\leq u_1$ holds simultaneously, which urge $u_1=u_2$.
%\qed
%\bigskip

2). As a corollary of 1),  $\mathfrak M_-^\theta=\emptyset$ implies that any two solutions satisfy $u_1\leq u_2$ and $u_2\leq u_1$ simultaneously, so $u_1=u_2$. 

3) If $\mathfrak M_-^\theta=\{\mu\}$ is a singleton, then for any two solutions $u_1, u_2$, their integations with respect to $\mu$ are comparable. Due to 1), $u_1$ and $u_2$ are also comparable. Moreover, if the integrations equal to each other, then $u_1=u_2$.
 \qed
%\begin{rmk}
%\end{rmk}

\section{Examples: the necessity of integrable non-degeneracy}\label{s3}

For classical Hamilton-Jacobi equations independent of $u$,  similar results as our Theorem \ref{thm:l-cp} and Theorem \ref{thm:g-cp} were proved earlier in Theorem 12.6 of \cite{F2}  (see also \cite{DS,I} for different versions). As a generalization to the nonlinearly $u-$dependent case, our conclusions also reveal additional features, which would never happen 
for the $u-$independent occasion. In this section, we will show these new features by examples. Besides, such examples show certain optimality of our assumptions. 
%Since for Hamiltonians independent of $u$, the associated \eqref{eq:hjc} always admits a family of solutions (which differ by additive constants), our examples will avoid this situation and be nontrivial. 

\medskip

The first example is devoted to show that, if $\mathfrak M_-^\theta\neq\emptyset$, more than one solution of \eqref{eq:hjc} can be found. Therefore, criterion \eqref{eq:cp-mea} in Theorem \ref{thm:g-cp} is necessary.
\begin{ex}
Suppose $H_0(x,p)=p^2/2+\cos2\pi x-1\in\R$ and $H(x,p,u)= H_0(x,p)+(1-\cos2\pi x) u$ with $(x,p,u) \in T^*(\R\slash\Z)\times\R$, then $0$ is the Ma\~n\'e's critical value of $H_0$ system. Recall that any viscosity solution $u(x)$ of \eqref{eq:hjc} has to be a weak KAM solution, vice versa (due to Lemma \ref{lem:sub}). Besides, $H(x,p,u)=H(1-x,p,u)$ for any $(x,p,u)\in  T^*(\R\slash\Z)\times\R$. Consequently, 
\beq
u_\lb(x)=\left\{
\begin{aligned}
&1-(\frac{\cos\pi x}\pi+\lb)^2,\quad {\rm when}\; x\in[0,1/2),\\
&1-(\lb-\frac{\cos\pi x}\pi)^2,\quad {\rm when}\; x\in[1/2,1),\\
\end{aligned}
\right.
\eeq
is always a viscosity solution for any $\lb\in [0,+\infty)$, so \eqref{eq:hjc} has infinitely many solutions (see Fig. \ref{fig1}). 

On the other side, for $\lb_1>\lb_2\geq 0$, the associated solution $u_{\lb_1}> u_{\lb_2}$. This is because the Dirac measure $\dt_{(0,0)}$ supported on $(0,0)\in T(\R\slash\Z)$ is the only measure in $ \mathfrak M^{\theta=0}$. 
%of ${\bf H}^u:T^*(\R\slash\Z)\rightarrow\R$,  
\end{ex}
\begin{figure}
\begin{center}
\includegraphics[width=8cm]{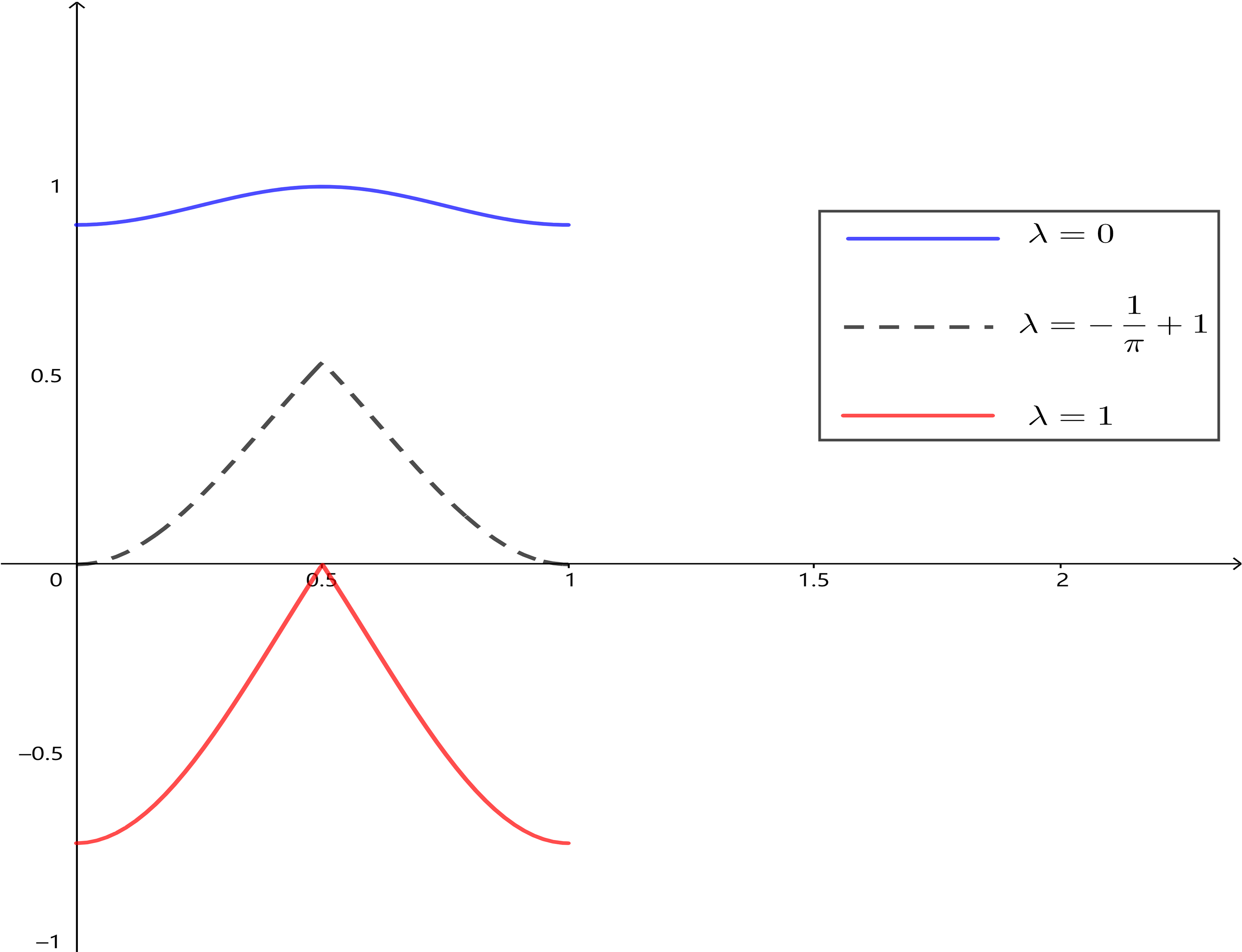}
\caption{When $\lb=0$, the solution $u_\lb$ is actually $C^1-$smooth, which therefore is a classical solution. As $\lb\geq 0$ increases, $u_\lb$ descends in the way comforming to item 3) of Theorem \ref{thm:g-cp}.}
\label{fig1}
\end{center}
\end{figure}

The second example is used to show that $\emptyset\neq\mathfrak M_-^\theta\subsetneq\mathfrak M^\theta$ could happen, which causes more complicated multiplicity of solutions.

\begin{ex}
Suppose $\alpha:\R\slash 2\Z\rightarrow\R$ is defined by 
\beq
\alpha(x)=\left\{
\begin{aligned}
&1+\cos2\pi x,\quad &{\rm when}\;& x\in[1/2,3/2),\\
&0,\quad &{\rm when}\;& x\in[0,1/2)\cup[3/2,2),\\
\end{aligned}
\right.
\eeq
then $H(x,p,u):=H_0(x,p)+\alpha(x)u$ is $2-$periodic of $x$, where $H_0(x,p)$ is as in previous example. For such a $H(x,p,u)$, the associated \eqref{eq:hjc} with $c=0$ admits two $C^1-$smooth solutions (see Fig. \ref{fig2}) expressed by 
\beq
u_1(x)=\left\{
\begin{aligned}
&g_1( 1/2)+\int_{1/2}^x\sqrt{2(1- \cos2\pi s)}\, {\rm ds}, \quad &{\rm when}\; x\in[0,1/2),\\
&g_1(x),\quad &{\rm when}\; x\in[1/2,1),\\
&g_1(2-x),\quad &{\rm when}\; x\in[1,3/2),\\
&g_1(1/2)+\int^{2-x}_{1/2}\sqrt{2(1- \cos2\pi s)}\, {\rm ds},\quad &{\rm when}\; x\in[3/2,2),\\
\end{aligned}
\right.
\eeq
\beq
u_2(x)=\left\{
\begin{aligned}
&g_2(1/2)+\int_x^{1/2}\sqrt{2(1- \cos2\pi s)}\, {\rm ds},\quad &{\rm when}\; x\in[0,1/2),\\
&g_2(x),\quad &{\rm when}\; x\in[1/2,1),\\
&g_2(2-x),\quad &{\rm when}\; x\in[1,3/2),\\
&g_2(1/2)+\int_{2-x}^{1/2}\sqrt{2(1- \cos2\pi s)}\, {\rm ds},\quad &{\rm when}\; x\in[3/2,2),\\
\end{aligned}
\right.
\eeq
where $g_1(x)$ (resp. $g_2(x)$) is the solution of 
\[
\left\{
\begin{aligned}
&g'(x)=\sqrt{2(1-\cos2\pi x)-2(1+\cos2\pi x) g}\\
&g(1)=0
\end{aligned}
\right.
\]
\[
\bigg(\,{\rm resp.}\quad \left\{
\begin{aligned}
&g'(x)=\sqrt{2(1-\cos2\pi x)-2(1+\cos2\pi x) g}\\
&g(1)=0
\end{aligned}
\right.\bigg).
\]
%Notice that $u_1\in C^1(\R\slash2\Z,\R)$, then it's a classical solution of \eqref{eq:hjc}. $u_2$ is $C^1-$smooth, except at the points $x=1/2,3/2$. Besides, 
In this case, $\mathfrak M_-^{\theta=0}=\{\dt_{(0,0)}\}\subsetneq \mathfrak M^{\theta=0}=\{\dt_{(0,0)}, \dt_{(1,0)}\}$. 
% Such a multiplicity of classical solutions (which differ by additive constants)  is new to $u-$independent Hamiltonian systems.
\end{ex}

\begin{figure}
\begin{center}
\includegraphics[width=9cm]{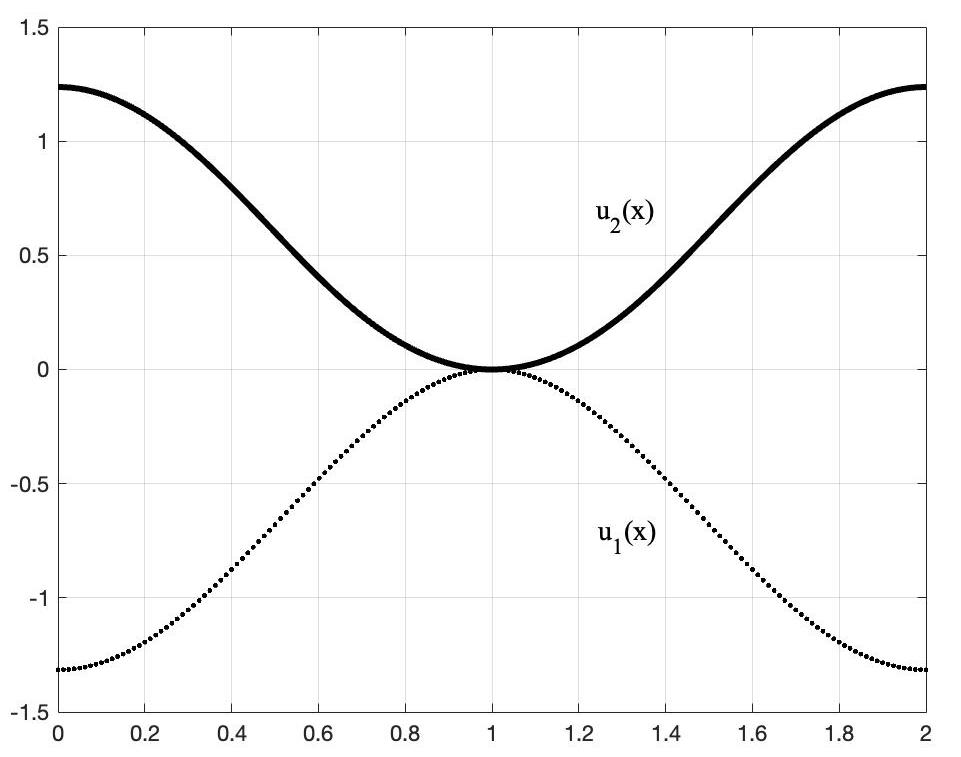}
\caption{A couple of classical solutions exists once $0\neq\mathfrak M_-^\theta\subsetneq\mathfrak M^\theta$.}
\label{fig2}
\end{center}
\end{figure}

\section{Contact dynamics of the Mather measures}\label{s4}

Throughout this section, the Hamiltonian $H:T^*M\times\R\rightarrow\R$ is assumed to be Tonelli and $C^2$ smooth. Since $c\in\mathfrak C$ is fixed, we can denote by $
H_c(x,p,u):=H(x,p,u)-c$ 
and the associated Lagrangian $L_c:=L(x,v,u)+c$. For convenience, we rewrite equation \eqref{eq:ode} associated with $H_c$ here:
\beq
\left\{
\beal
&\dot x=\partial_p H(x,p,u),\\
&\dot p=-\partial_x H-\partial_u H(x,p,u) p, \qquad (x,p,u)\in T^*M\times\R.\\
&\dot u=\langle p,\partial_p H\rangle- H(x,p,u)+c,
\enal
\right.
\eeq
of which $\Phi_{H,c}^t:(x,p,u)\in T^*M\times\R\rightarrow T^*M\times\R$ is the flow map. The {\it Legendre transform} 
\[
\cL: (x,p,u)\in T^*M\times\R\rightarrow  (x,\partial_p H(x,p,u), u)\in TM\times\R
\]
transports the flow $\Phi_{H,c}^t$ into the corresponding {\it Lagrangian flow} $\Phi_{L,c}^c:=\cL\circ\Phi_{H,c}^t$ which solves the following  {\it contact Euler-Lagrange equation}
\beq\label{eq:e-l}
\left\{
\beal
&\dot x=v,\\
&\frac{d}{dt}\partial_vL(x,v,u)=\partial_x L(x,v,u)+\partial_u L(x,v,u)\partial_vL(x,v,u) , \qquad (x,v,u)\in TM\times\R.\\
&\dot u=L(x,v,u)+c.
\enal
\right.
\eeq
%If so, any measure $\mu\in\mathfrak M(c)$ has to be invariant w.r.t. \eqref{eq:ode}, due to \cite{Mn}. Similarly, 
We will show that $\mu\in\mathfrak M(u)$ is invariant w.r.t. $\Phi_{L,c}^t$. In other words,  ${\rm supp}(\mu)$ is the $\alpha-$limit set of a backward calibrated trajectory (w.r.t. $u$) $\gamma:(-\infty,0]\rightarrow M$ which actually induces a real trajectory of \eqref{eq:e-l} by the following Lemma:
\begin{lem}\label{lem:cali-flow}
Any backward $u-$calibrated curve $\gamma:(-\infty,0]\rightarrow M$  is $C^1$ smooth. Moreover, $\Big(\gamma(t), \partial_vL\big(\gamma(t),\dot\gamma(t), u(\gamma(t))\big), u(\gamma(t))\Big):t\in(-\infty,0]\rightarrow T^*M\times\R$ is a $C^1$ trajectory of \eqref{eq:ode}.
\end{lem}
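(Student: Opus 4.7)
I would prove the statement by deriving the contact Euler--Lagrange equation for $\gamma$ via a one-sided variational argument on compact subintervals, then bootstrap to $C^1$ regularity and pass to the Hamiltonian side through the Legendre transform. Fix $[a,b]\subset(-\infty,0]$ and set $U(t):=u(\gamma(t))$; the calibration identity together with Lemma \ref{lem:sub} yields $\dot U=L(\gamma,\dot\gamma,U)+c$ a.e. For a smooth variation $\eta:[a,b]\to\R^n$ in a chart covering $\gamma|_{[a,b]}$ with $\eta(a)=\eta(b)=0$, I introduce $\gamma_\epsilon:=\gamma+\epsilon\eta$ and let $U_\epsilon$ solve the initial-value problem
\[
\dot U_\epsilon=L(\gamma_\epsilon,\dot\gamma_\epsilon,U_\epsilon)+c,\qquad U_\epsilon(a)=u(\gamma(a)),
\]
so that $\epsilon\mapsto U_\epsilon(b)$ is $C^1$ by the Carath\'eodory parameter-dependence theorem.

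The crux is that $\epsilon=0$ minimizes $U_\epsilon(b)$. Indeed, setting $V(t):=u(\gamma_\epsilon(t))$ and applying Fenchel duality to the subsolution inequality $H(x,d_xu,u(x))\le c$ produces $\dot V\le L(\gamma_\epsilon,\dot\gamma_\epsilon,V)+c$ a.e.\ with $V(a)=U_\epsilon(a)$; a standard Gronwall comparison, using only Lipschitz dependence of $L$ in the $u$-slot, then gives $V\le U_\epsilon$, so $U_\epsilon(b)\ge u(\gamma(b))$, with equality at $\epsilon=0$ by ODE uniqueness. The first-order condition $w(b)=0$ for $w(t):=\partial_\epsilon U_\epsilon(t)|_{\epsilon=0}$, which satisfies the linear ODE $\dot w=\partial_xL\cdot\eta+\partial_vL\cdot\dot\eta+\partial_uL\cdot w$ with $w(a)=0$, becomes, after introducing the integrating factor $\phi(s):=\exp\bigl(\int_s^b\partial_uL\,d\tau\bigr)$ (for which $\dot\phi=-\partial_uL\cdot\phi$) and integrating by parts,
\[
\int_a^b\bigl[\phi\,\partial_xL-\tfrac{d}{ds}(\phi\,\partial_vL)\bigr]\cdot\eta\,ds=0\qquad\text{for all admissible }\eta.
\]
This produces the contact Euler--Lagrange equation $\tfrac{d}{dt}\partial_vL=\partial_xL+\partial_uL\,\partial_vL$ a.e., first on $[a,b]$ and, by arbitrariness of the interval, on all of $(-\infty,0]$.

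For regularity and the Hamiltonian form, strict convexity of $L$ in $v$ allows one to invert $p=\partial_vL(x,v,u)$ to obtain $v=v(x,p,u)$ of class $C^1$. Since the Euler--Lagrange equation makes $p(t):=\partial_vL(\gamma(t),\dot\gamma(t),U(t))$ absolutely continuous, a bootstrap forces $\dot\gamma$ to be continuous and hence $\gamma\in C^1$; the Fenchel duality identities $\partial_xL=-\partial_xH$, $\partial_uL=-\partial_uH$, $v=\partial_pH$, $L=\langle p,v\rangle-H$ then translate the contact Euler--Lagrange system together with $\dot U=L+c$ into precisely \eqref{eq:ode} for the triple $(\gamma(t),p(t),U(t))$.

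The main obstacle I anticipate is exactly the one-sided minimization step: because $u$ is only Lipschitz, one cannot differentiate $u\circ\gamma_\epsilon(b)$ in $\epsilon$ directly. The auxiliary ODE for $U_\epsilon$ circumvents this by furnishing a $C^1$-in-$\epsilon$ upper majorant of $u(\gamma_\epsilon(b))$ that coincides with it at $\epsilon=0$, so the calibration-plus-domination inequality is promoted to a genuine variational minimum of a smooth scalar function; everything else is classical Tonelli bookkeeping.
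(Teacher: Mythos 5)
Your proposal is correct in substance but takes a genuinely different route from the paper. The paper follows a ``verification function'' scheme in the spirit of Fathi's Theorem~3.6.1: around a differentiability point $s$ of $\gamma$, it builds a local $C^1$ solution $U(x,t)$ of the evolutionary equation $\partial_t\omega+H(x,\partial_x\omega,\omega)=c$ from the contact characteristic flow, proves via two inequalities (one from the dynamic programming formula, one from the calibration) that $F(\tau):=U(\gamma(\tau+s),\tau)-u(\gamma(\tau+s))\equiv 0$, and then reads off from $\partial_t U+\langle\partial_xU,\dot\gamma\rangle=L+c$ together with Young's inequality that $\partial_xU=\partial_vL(\gamma,\dot\gamma,u\circ\gamma)$, so the lifted curve is a genuine contact characteristic. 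You instead give a direct Tonelli-type variational argument: you enlarge the functional by replacing the Lipschitz composition $u\circ\gamma_\epsilon$ with the solution $U_\epsilon$ of the surrogate ODE $\dot U_\epsilon=L(\gamma_\epsilon,\dot\gamma_\epsilon,U_\epsilon)+c$, which is $C^1$ in $\epsilon$; the domination property $u\prec\mathbf{L}^u+c$ plus a scalar comparison shows $\epsilon=0$ is an interior minimizer, and the first-order condition, processed with the integrating factor $\phi$ and a du Bois--Reymond argument, yields the contact Euler--Lagrange equation and then the $C^1$ bootstrap via the Legendre inverse. This route avoids invoking the short-time classical solvability of the evolutionary contact HJ equation (Corollary~2.8.12 of Fathi, or \cite{WWY}), at the cost of the auxiliary-ODE device needed to make the functional differentiable; the paper's route avoids the du Bois--Reymond step by getting $C^1$ regularity directly from the flow.

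Two minor points to tighten if you write this out: (a) the pointwise inequality $\dot V\le L(\gamma_\epsilon,\dot\gamma_\epsilon,V)+c$ should be deduced from the integral domination $u\prec\mathbf{L}^u+c$ of Lemma~\ref{lem:sub} rather than from a direct chain rule, since $\gamma_\epsilon$ may spend positive time in the non-differentiability set of $u$; and (b) the step you call ``integrating by parts'' must be the du Bois--Reymond lemma, because $\phi\,\partial_vL(\gamma,\dot\gamma,U)$ is only known to be $L^\infty$ at that stage --- one first concludes it is absolutely continuous and only then that its derivative is $\phi\,\partial_xL$ a.e. Neither point is a gap; both are routine.
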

\proof
The scheme is quite similar with the one in Theorem 3.6.1 of \cite{F}. First, due to Proposition \ref{prop:dp}, there holds 
\[
\frac d{dt} u(\gamma(t))= L(\gamma(t), \dot\gamma(t), u(\gamma(t)))+c,\quad {\rm a.e.} \,t\in(-\infty,0).
\]
Suppose $s<0$ is a differentiable point of $\gamma$, i.e., 
\[
(x_0,p_0,u_0):=\big(\gamma(s), \partial_ vL\big(\gamma(s),\dot\gamma(s), u\circ \gamma(s)\big), u\circ\gamma(s)\big)\in T^*M\times\R
\]
is uniquely identified, then for suitably small $\dt, P_s>0$, the flow $\Phi_{H,c}^\tau$ of \eqref{eq:ode} (associated with $H_c$) is well defined in the region 
\[
\Om_s\times(-\dt,\dt):=\{(x_0,p,u_0)\in T^*M\times\R:  |p-p_0|\leq P_s\}\times\{t\in\R:|t|\leq \dt\}.
%X(\tau):=\Phi_{H,c}^\tau\Big(\gamma(s), \partial_ vL\big(\gamma(s),\dot\gamma(s), u\circ \gamma(s)\big), u\circ\gamma(s)\Big), \quad \tau\in(-\dt,\dt)
\]
Moreover, for any $t\in(0,\dt]$, $\pi\circ \Phi_{H,c}^t:\Om_s\rightarrow M$ is a diffeomorphism since $\partial_{pp}H$ is positive definite everywhere. Denote by $\Lb_s^t=\pi\circ\Phi_{H,c}^t(\Om_s)$ for $t\in(0,\dt]$ and 
\[
\Pi_s^\dt:= \{(x,t)\in M\times\R: x\in \Lb_s^t, t\in(0,\dt]\}, 
\]
then for any $(x,t)\in \Pi_s^\dt$, there exists a unique $p_{x,t}\in T^*_{x_0}M$ with $|p_{x,t}-p_0|\leq P_s$, such that  $\pi\circ\Phi_{H,c}^t(x_0,p_{x,t},u_0)=x$. If we denote by $U(x,t):=\pi_u\circ\Phi_{H,c}^t(x_0,p_{x,t},u_0)$\footnote{Here $\pi_u:(x,p,u)\in T^*M\times\R\rightarrow u\in\R$ is the canonical projection to the $u-$component.}, then it's a $C^1-$smooth solution of the following evolutionary equation 
\[
\partial_t\om+H(x,\partial_x\om, \om)=c,\quad {\rm for}\;(x,t)\in \Pi_s^\dt,
\]
see Corollary 2.8.12 of \cite{F}, or \cite{A,WWY} for the proof.
Due to a similar argument as in Proposition \ref{prop:dp},  there holds 
\be\label{eq:arnold}
U(x,t)=\inf_{\substack{(y,\tau)\in \Pi_s^\dt\\\eta\in C^{\rm ac}([\tau, t], \pi(\Pi_s^\dt))\\\eta(\tau)=y,\eta(t)=x}}\Big(U(y,\tau)+\int_\tau^t\big[L\big(\eta(\sigma),\dot\eta, U(\eta(\sigma), \sigma)\big)+c\big] d\sigma\Big).
\ee
Notice that $\lim_{t\rightarrow 0_+}U(x,t)=u(x_0)=u(\gamma(s))$ for any $(x,t)\in\Pi_s^\dt$. So we can define 
\[
F(\tau):=U(\gamma(\tau+s),\tau)-u(\gamma(\tau+s)), 
\]
then $\lim_{\tau\rightarrow 0_+}F(\tau)=0$. We aim to show that $F(\tau)\equiv 0$ for any $\tau \in [0,\min\{-s,\dt\})$. We first prove that $F(\tau)\leq 0$. If not, then
$F(\tau')>0$ for some point $\tau'\in [0,\min\{-s,\dt\})$. Due to the continuity of $F(\cdot)$ there has to be a smallest $\tau_0\in[0,\tau')$, such that $F(\tau_0)=0$ and $F(\tau)>0$ for all $\tau\in(\tau_0,\tau']$. However, for any $t\in(\tau_0,\tau')$, due to Proposition \ref{prop:dp} we have 
\ben
0<F(t)&=&[U(\gamma(t+s),t)-u(\gamma(\tau_0+s))]-[u(\gamma(t+s))-u(\gamma(\tau_0+s))]\\
&=&[U(\gamma(t+s),t)-u(\gamma(\tau_0+s))]-\int_{\tau_0+s}^{t+s}L\big(\gamma(\tau),\dot\gamma(\tau), u(\gamma(\tau))\big) d\tau\\
&=&[U(\gamma(t+s),t)-U(\gamma(\tau_0+s),\tau_0)]-\int_{\tau_0+s}^{t+s}L\big(\gamma(\tau),\dot\gamma(\tau), u(\gamma(\tau))\big) d\tau\\
&\leq &\int_{\tau_0+s}^{t+s}\big[L\big(\gamma(\tau),\dot\gamma(\tau), U(\gamma(\tau), \tau-s)\big)-L\big(\gamma(\tau),\dot\gamma(\tau), u(\gamma(\tau))\big)\big] d\tau\\
&\leq & 0
%\lb\int_{\tau_0}^tU(\gamma(\tau+s),\tau+s)-u(\gamma(\tau+s))d\tau\\
%&=&\lb\int_{\tau_0}^tF(\tau)d\tau
\een
which is a contradiction. Next, we prove that $F(\tau)\geq 0$. If not, then
$F(\tau')<0$ for some point $\tau'\in [0,\min\{-s,\dt\})$. Due to the definition of $U(x,t)$, there exists a $C^1-$minimizer $\eta: [s,\tau'+s]\rightarrow \pi\Pi_s^\dt$ of \eqref{eq:arnold} connecting $\gamma(s)$ to $\gamma(\tau'+s)$. We define $G(\tau):=U(\eta(\tau+s), \tau)-u(\eta(\tau+s))$, then $G(0)=F(0)=0$ and $G(\tau')=F(\tau')<0$. Due to the continuity of $G(\cdot)$ there exists a smallest $\tau_0 \geq 0$ such that $G(\tau_0)=0$ and $G(\tau)<0$ for all $\tau\in(\tau_0,\tau']$. We can repeat above inequality, i.e. 
%Due to the continuity of $F(\cdot)$ there has to be a smallest $\tau_0\in[0,\tau')$, such that $F(\tau_0)=0$ and $F(\tau)<0$ for all $\tau\in(\tau_0,\tau']$. Therefore, for any $t\in(\tau_0,\tau')$ we have 
\ben
0<-F(\tau')&=&-G(\tau')\\
&=&[u(\eta(\tau'+s))-u(\eta(\tau_0+s))]-[U(\eta(\tau'+s),\tau')-U(\eta(\tau_0+s),\tau_0)]\\
&=&[u(\eta(\tau'+s))-u(\eta(\tau_0+s))]-\int_{\tau_0+s}^{t+s}L\big(\eta(\tau),\dot\eta(\tau), U(\eta(\tau),\tau-s)\big) d\tau\\
%&=&[U(\gamma(t+s),t)-u(\gamma(\tau_0+s))]-\int_{\tau_0+s}^{t+s}L\big(\gamma(\tau),\dot\gamma(\tau), U(\gamma(\tau))\big) d\tau\\
&\leq&\int_{\tau_0+s}^{t+s}L\big(\eta(\tau),\dot\eta(\tau), u(\eta(\tau))\big) d\tau-\int_{\tau_0+s}^{t+s}L\big(\eta(\tau),\dot\eta(\tau), U(\eta(\tau),\tau-s)\big) d\tau\\
&\leq & 0
%\lb \int_{\tau_0+s}^{t+s}\big|U(\eta(\tau), \tau-s)-u(\eta(\tau))\big|d\tau=  \lb\int_{\tau_0}^t |G(\tau)|d\tau
\een
which is also a contradiction.

Recall that $\gamma:(-\infty,0]\rightarrow M$ is a backward $u-$calibrated curve which is differentiable at $s\in(-\infty,0)$, previous argument actually implies 
\[
\frac d{dt} u\circ\gamma(t)=\langle d_xu(\gamma(t)),\dot\gamma(t)\rangle= L\big(\gamma(t),\dot\gamma(t), u(\gamma(t)\big)+c
\]
for all $t\in(s,\min\{0,s+\dt\})$. Due to {\it Young's inequality}, there must be 
\[
\Phi_{H,c}^{t-s}(x_0,p_0,u_0)=\big(\gamma(t), \partial_vL(\gamma(t),\dot\gamma(t), u(\gamma(t))), u(\gamma(t))\big)
\]
for all $t\in(s,\min\{0,s+\dt\})$. Since we can choose $s\in(-\infty,0)$ freely and almost everywhere, then $\big(\gamma(t), \partial_vL(\gamma(t),\dot\gamma(t), u(\gamma(t))), u(\gamma(t))\big)$ is a trajectory of \eqref{eq:ode} (associated with $H_c$) for all $t\in(-\infty,0]$.
\qed\\

\noindent{\it Proof of Theorem \ref{thm:mat-set}:}
Due to Lemma \ref{lem:cali-flow}, any $u-$calibrated curve $\gamma:(-\infty,0]\rightarrow M$ is actually $C^1-$smooth and $(\gamma,\dot\gamma, u(\gamma))$ solves \eqref{eq:e-l}
%\beq\label{eq:e-l}
%\left\{
%\begin{aligned}
%&\frac d{dt}\partial_v L(\gamma(t),\dot\gamma(t), u\circ \gamma(t))=\partial_x L(\gamma(t),\dot\gamma(t), u\circ \gamma(t))\\
%& \hspace{130pt}+\partial_uL (\gamma(t),\dot\gamma(t), u\circ \gamma(t))\partial_v L(\gamma(t),\dot\gamma(t), u\circ \gamma(t)), \\
%&\frac{d}{dt} u\circ\gamma(t)=L(\gamma(t),\dot\gamma(t), u\circ \gamma(t)),
%\end{aligned}
%\right.
%\eeq
for all $t\in(-\infty,0)$. For any $\mu\in\mathfrak M(u)$, we can always find a $u-$calibrated curve $\gamma:(-\infty,0]\rightarrow M$ and two sequences $a_n\leq b_n\leq 0$ with $\lim_{n\rightarrow+\infty} b_n-a_n=+\infty$, such that 
\ben
& &\int_{TM\times\R} f(x,v,u) d\mu(x,v,u)\\
&:=&\lim_{n\rightarrow+\infty}\frac{1}{b_n-a_n}\int_{a_n}^{b_n} f(\gamma(t),\dot\gamma(t),u\circ\gamma(t)) dt,\quad\forall\, f\in C_c(TM\times\R,\R).
\een
Since $\overline{\{(\gamma(t), \dot\gamma(t), u\circ\gamma(t))|t\in(-\infty,0)\}}\subset TM\times\R$ is pre-compact, then for any $s\in\R$, 
\ben
\int_{TM\times\R} f(x,v,u) d(\Phi_{L,c}^s)^*\mu(x,v,u)&=&\int_{TM\times\R} f\big(\Phi_{L,c}^s(x,v,u)\big) d\mu(x,v,u)\\
&=&\lim_{n\rightarrow+\infty}\frac{1}{b_n-a_n}\int_{a_n}^{b_n} f(\gamma(t+s),\dot\gamma(t+s),u\circ\gamma(t+s)) dt\\
&=&\lim_{n\rightarrow+\infty}\frac{1}{b_n-a_n}\int_{a_n+s}^{b_n+s} f(\gamma(t),\dot\gamma(t),u\circ\gamma(t)) dt\\
&=&\lim_{n\rightarrow+\infty}\frac{1}{b_n-a_n}\int_{a_n}^{b_n} f(\gamma(t),\dot\gamma(t),u\circ\gamma(t)) dt\\
& &+\lim_{n\rightarrow+\infty}\frac{1}{b_n-a_n}\int_{b_n}^{b_n+s} f(\gamma(t),\dot\gamma(t),u\circ\gamma(t)) dt\\
& &-\lim_{n\rightarrow+\infty}\frac{1}{b_n-a_n}\int_{a_n}^{a_n+s} f(\gamma(t),\dot\gamma(t),u\circ\gamma(t)) dt\\
&=&\lim_{n\rightarrow+\infty}\frac{1}{b_n-a_n}\int_{a_n}^{b_n} f(\gamma(t),\dot\gamma(t),u\circ\gamma(t)) dt\\
&=&\int_{TM\times\R} f(x,v,u) d\mu(x,v,u),
\een
where $\Phi_{L,c}^s:TM\times\R\rightarrow TM\times\R$ is the flow of equation  \eqref{eq:e-l}, which is actually the conjugated equation of \eqref{eq:ode} (associated with $H(x,p,u)-c$) due to the Legendre transformation. Consequently, $\mu$ is invariant w.r.t. $\Phi_{L,c}^t$.

On the other side, due to Corollary 10.3 and Theorem 7.8 of \cite{FS}, the solution $u(x)$ is actually differentiable on $\cM$. Consequently, for any $(x_0,p_0,u_0)\in \wt\cM(u)$, 
\[
\frac d{dt}u(x(t))= \langle d_x u(x(t)),\dot x(t)\rangle,\quad\forall\, t\in\R
\]
along the trajectory $(x(t), p(t), u(t)):=\Phi_{H,c}^t(x_0,p_0,u_0)$ of the equation \eqref{eq:ode}. Since for any initial point, the trajectory of \eqref{eq:ode} (associated with $H(x,p,u)-c$) is unique, then 
\[
d_xu(x(t))=\partial_vL(x(t),\dot x(t), u(t)),\quad \forall t\in\R
\]
due to Young's inequality. That implies $\wt\cM(u)$ can be expressed as in \eqref{eq:graphic}, then due to Theorem \ref{thm:mat-graph}, the map $\pi^{-1}:\cM(u)\rightarrow \wt\cM(u)$ is Lipschitz.
\qed\\

Since any $\mu\in \mathfrak M(u)$  is invariant w.r.t. \eqref{eq:ode} (associated with $H(x,p,u)-c$), then due to the {\it ergodic decomposition theorem} (e.g. see \cite{Pol}), the measure  $\mu$ is actually a convex combination of a family of ergodic Mather measures, i.e. $\mu=\sum_{i\in\Lb}\lb_i\mu_i$ with $\sum_{i\in\Lb}\lb_i=1$ for certain index set $\Lb$. For each ergodic $\mu_i$, there exists a
bilateral $u-$calibrated curve $\gamma_i:\R\rightarrow M$, such that 
\beq
\overline{\{(\gamma_i(t),\dot\gamma_i(t))\in TM|  t\in(-\infty,+\infty) \}}={\rm supp}(\mu_i).
\eeq
The associated $\wt\cM(u)|_{(x,v)\in {\rm supp}(\mu_i)}$ is a topologically minimal set.

%Moreover, due to the {\it Birkhoff Ergodic Theorem}, there holds 
%\beq
%\lim_{T\rightarrow+\infty}\frac1T\int_0^T f(\gamma,\dot\gamma) dt =\int_{TM} f(x,v) d\mu(x,v), \quad\forall f\in C_c(TM,\R).
%\eeq
%That implies $\mathfrak M(u)$ (resp. $\mathfrak M_-(u)$) is actually a convex hull in $\cP(TM,\R)$. 

%{\color{blue}
%\begin{thm}[measure-theoretic entropy]
%Suppose $H$ is $C^2$ smooth and equipped with {\bf H1-H3, H4', H5}. For any $c\in\mathfrak C$ and any ergodic $\mu\in\mathfrak M_+(c):=\mathfrak M(c)\backslash\mathfrak M_-(c)$, there exists a uniquely identified value 
%\[
%h_{\mu}(c):=-2\int_{TM}\partial_u H(x,v, \theta_+) d\mu(x,v)
%\]
%keeps invariant under contact-diffeomorphism, where $I(c)=[\theta_-,\theta_+]$ follows the seeting in Definition \ref{defn:t-mea}.
%
%Similarly, if $H$ is $C^2$ smooth and equipped with {\bf H1-H4}, then for any $c\in\mathfrak C$, any viscosity solution $u$ of \eqref{eq:hjc} and any ergodic $\mu\in\mathfrak M_+(u):=\mathfrak M(u)\backslash\mathfrak M_-(u)$, there exists a uniquely identified value 
%\[
%h_{\mu}(u):=-2\int_{TM}\partial_u H(x,v, u(x)) d\mu(x,v)
%\]
%keeps invariant under contact-diffeomorphism.
%\end{thm}
%
%\begin{cor}
%1) If $\mathfrak M_+(c)=\emptyset$, then $\mathfrak M(c)$ is chain recurrent. 2) If $\mathfrak M_+(u)=\emptyset$, then $\mathfrak M(u)$  is chain recurrent.
%\end{cor}
%}

\section{Micro structure of $\mathfrak C$ and its constraint on Mather measures}\label{s5}

Throughout this section, the Hamiltonian $H$ is assumed to satisfy {\bf H1-H4}. \medskip

%\textb{Due to item 2 of Corollary \ref{cor:g-cp}, any two solutions of \eqref{eq:hjc} are comparable, once $\mathfrak M_-(c)$ is a singleton. That inspires us to explore, to what extent the comparison principle is available in the space of all Hamiltonians. }

\noindent{\it Proof of Theorem \ref{thm:h4}:}
1). Recall that $c:\R\rightarrow\R$ is non-decreasing. For any $a<b$,  there holds 
\ben
0\leq c(b)-c(a)&=& \inf_{\mu\in\cC}\int_{TM}L(x,v,a)d\mu-\inf_{\mu\in\cC}\int_{TM}L(x,v,b)d\mu\\
&=&\inf_{\mu\in\cC}\int_{TM}L(x,v,a)d\mu-\int_{TM}L(x,v,b)d\mu_b\\
&\leq&\int_{TM}\big(L(x,v,a)-L(x,v,b)\big)d\mu_b
\een
for any $\mu_b\in\mathfrak M^b$. Due to \cite{CFZZ}, the Mather set $\wt\cM^b\subset TM\times\R$ is uniformly compact for any $b\in[a, a+1]$, which implies 
\[
0\leq c(b)-c(a)\leq \max_{\substack{(x,v)\in\wt\cM^\theta\\\theta\in[a,b]}}|\partial_uL(x,v,\theta)|(b-a).
\]
So $c:\R\rightarrow\R$ is locally Lipschitz and differentiable for almost every point in $\R$.

2). As previously given, for $a<b$
\ben
c(b)-c(a)
&\leq&\int_{TM}\big(L(x,v,a)-L(x,v,b)\big)d\mu_b
\een
for any $\mu_b\in\mathfrak M^b$. On the other side, 
\ben
c(b)-c(a)&=&\inf_{\mu\in\cC}\int_{TM}L(x,v,a)d\mu-\inf_{\mu\in\cC}\int_{TM}L(x,v,b)d\mu\\
&=&\int_{TM}L(x,v,a)d\mu_a-\inf_{\mu\in\cC}\int_{TM}L(x,v,b)d\mu\\
&\geq&\int_{TM}\big(L(x,v,a)-L(x,v,b)\big)d\mu_a
\een
for any $\mu_a\in\mathfrak M^a$. Combining these two inequalities we get 
\beq\label{eq:+}
\int_{TM}\frac{L(x,v,a)-L(x,v,b)}{b-a}d\mu_a\leq \frac{c(b)-c(a)}{b-a}\leq \int_{TM}\frac{L(x,v,a)-L(x,v,b)}{b-a}d\mu_b
\eeq
%\[
%-\int_{TM}\partial_uL(x,v,a)d\mu_a\leq \frac{c(b)-c(a)}{b-a}\leq -\int_{TM}\partial_uL(x,v,b)d\mu_b
%\]
for any $\mu_a\in\mathfrak M^a$ and $\mu_b\in\mathfrak M^b$. 
%By taking $b\rightarrow a_+$, we conclude 
%\beq\label{eq:+}
%-\int_{TM}\partial_uL(x,v,a)d\mu_a\leq c'_+(a):=\lim_{b\rightarrow a_+}\frac{c(b)-c(a)}{b-a}\leq -\int_{TM}\partial_uL(x,v,a)d\sigma_a
%\eeq
%for any $\mu_a\in\mathfrak M^a$ and $\sigma_a\in\varlimsup_{b\rightarrow a_+}\mathfrak M^b$. 
Similarly, for $b<a$ we get 
\beq\label{eq:-}
\int_{TM}\frac{L(x,v,a)-L(x,v,b)}{b-a}d\mu_b\leq \frac{c(b)-c(a)}{b-a}\leq \int_{TM}\frac{L(x,v,a)-L(x,v,b)}{b-a}d\mu_a
\eeq
%\[
%-\int_{TM}\partial_uL(x,v,b)d\mu_b\leq \frac{c(b)-c(a)}{b-a}\leq -\int_{TM}\partial_uL(x,v,a)d\mu_a
%\]
for any $\mu_a\in\mathfrak M^a$ and $\mu_b\in\mathfrak M^b$. 

%Furthermore, by taking $b\rightarrow a_-$, we conclude 
%\beq\label{eq:-}
%-\int_{TM}\partial_uL(x,v,a)d\sigma_a\leq c'_-(a):=\lim_{b\rightarrow a_-}\frac{c(b)-c(a)}{b-a}\leq -\int_{TM}\partial_uL(x,v,a)d\mu_a
%\eeq
% $\mu_a\in\mathfrak M^a$ and $\sigma_a\in\varlimsup_{b\rightarrow a_-}\mathfrak M^b$. 

As a consequence of \eqref{eq:-}, $\mathfrak M_-^\theta\neq\emptyset$ urges $\varlimsup_{\theta'\rightarrow\theta_-}\frac{c(\theta')-c(\theta)}{\theta'-\theta}\leq 0$, whereas due to {\bf H3} $\varliminf_{\theta'\rightarrow\theta_-}\frac{c(\theta')-c(\theta)}{\theta'-\theta}\geq 0$, so $c'_-(\theta)$ exists and equals $0$.

3). Due to \eqref{eq:+} and \eqref{eq:-}, $\mathfrak M_-^\theta=\emptyset$ urges that 
$\varlimsup_{\theta'\rightarrow\theta}\mathfrak M^{\theta'}\cap\mathfrak M_-^\theta=\emptyset$, so item 3) holds. 

4). If $c'(\theta)$ exists, then $c'(\theta)=c'_-(\theta)=c'_+(\theta)$. By taking $\theta'\rightarrow \theta$ from one side in \eqref{eq:+} and \eqref{eq:-} respectively, then we conclude
\[
c'_+(\theta)=\max_{\mu\in\mathfrak M^\theta}\int_{TM}-\partial_uL(x,v,\theta)d\mu(x,v), \quad c'_-(\theta)=\min_{\mu\in\mathfrak M^\theta}\int_{TM}-\partial_uL(x,v,\theta)d\mu(x,v),
\]
so these two formulas coincide. Due to {\bf H3}, if $c'(\theta)>0$, then $\int_{TM}-\partial_uL(x,v,\theta)d\mu(x,v)>0$ for all $\mu\in\mathfrak M^\theta$, so $\mathfrak M_-^\theta=\emptyset$. Besides, $c'(\theta)=\int_{TM}-\partial_uL(x,v,\theta)d\mu(x,v)$ for all  $\mu\in\mathfrak M^\theta$. If $c'(\theta)=0$, we can following the argument and get $\mathfrak M^\theta\backslash\mathfrak M_-^\theta=\emptyset$.

5)-8). These four items can be drawn directly from  \eqref{eq:+} and \eqref{eq:-}, by taking $b\rightarrow a$ from each side. 
%$\mathfrak M_-^\theta\neq\emptyset$, item (1) holds. If $c(\theta)>c_0$, then $0<c'_-(\theta)\leq c'_+(\theta)$ and $\mathfrak M_-^\theta=\emptyset$. So item (2) holds. 
%
%If $\mathfrak M_-^\theta\neq\emptyset$, then $c'_-(\theta)\leq 0$. Since $c:\R\rightarrow\R$ is non-decreasing, that implies $c'_-(\theta)=0$ and then $c(\theta')=c(\theta)$ for any $\theta'<\theta$. That further leads to $c(\theta)=\inf\mathfrak C$, i.e. item (3) is true.  
%
%
%If $\mathfrak M_-^\theta=\emptyset$, then due to \eqref{eq:+} $c'_+(\theta)>0$. %Therefore, $c(\theta')>c(\theta)$ for any $\theta'>\theta$. 
%If $c(\theta)=\inf\mathfrak C$, then $c'_-(\theta)=0$. That urges 
%\[
%-\int_{TM}\partial_uL(x,v,a)d\sigma_\theta\leq 0
%\]
%for any $\sigma_\theta\in \varlimsup_{\theta'\rightarrow \theta_-}\mathfrak M^{\theta'}$. However, $\varlimsup_{\theta'\rightarrow \theta_-}\mathfrak M^{\theta'}\subset\mathfrak M^\theta$ and $\mathfrak M_-^\theta=\emptyset$, so due to the compactness of $\wt\cM^\theta\subset TM$, there exists a constant $\dt>0$ such that 
% \[
%0<\dt\leq -\int_{TM}\partial_uL(x,v,a)d\sigma_\theta\leq 0
%\]
%for any $\sigma_\theta\in\varlimsup_{\theta'\rightarrow \theta_-}\mathfrak M^{\theta'}$, which is a contradiction. So item (4) holds. 
\qed

\begin{prop}[ergodic constant 2]\label{prop:mane2}
If {\bf H1-H4, H5'} is assumed, then $c:\R\rightarrow\R$ is convex. Consequently, $\mathfrak C$ formally matches one in the four: $\{c_0\}, [c_0,+\infty)$, $(c_0,+\infty)$ and $(-\infty,+\infty)$ ($c_0\in\R$ is finite).
\end{prop}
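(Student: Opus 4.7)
The plan is to first upgrade H5' into concavity of the Lagrangian $L(x,v,u)$ in $u$, then push this through the variational formula for $c(\theta)$ to get convexity, and finally read off the four admissible shapes of $\mathfrak C$ from the behaviour of a continuous, non-decreasing, convex function on $\R$.

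For concavity of $L$ in $u$: by Fenchel duality
\[
-L(x,v,u) = \inf_{p \in T_x^*M} \bigl[H(x,p,u) - \langle v, p\rangle\bigr],
\]
and under H5' the integrand on the right is jointly convex in $(p,u)$, since $H$ is convex in $(p,u)$ and $-\langle v,p\rangle$ is linear. A standard fact from convex analysis (partial infimum of a jointly convex function over one block of variables is convex in the remaining block) then gives that $-L(x,v,\cdot)$ is convex, i.e.\ $L(x,v,\cdot)$ is concave. For convexity of $c$ I would invoke the variational formula for the Mañé critical value (Definition \ref{defn:mat} together with the identity already used in the proof of Theorem \ref{thm:h4}),
\[
c(\theta) \;=\; -\inf_{\mu \in \cC}\int_{TM} L(x,v,\theta)\, d\mu(x,v) \;=\; \sup_{\mu \in \cC}\int_{TM} \bigl[-L(x,v,\theta)\bigr] d\mu(x,v).
\]
For each fixed closed measure $\mu$, the integrand $-L(x,v,\theta)$ is convex in $\theta$ by Step~1, so $\theta \mapsto \int [-L(x,v,\theta)] d\mu$ is convex; a pointwise supremum of convex functions is convex, and so $c$ is convex.

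To classify $\mathfrak C = c(\R)$, I would argue as follows. By Proposition \ref{prop:mane}, $c$ is continuous and non-decreasing, and now convex. If $c$ is constant, $\mathfrak C = \{c_0\}$. Otherwise, pick $\theta_1 < \theta_2$ with $c(\theta_1) < c(\theta_2)$; convexity then gives the affine lower bound $c(\theta) \ge c(\theta_2) + [c(\theta_2)-c(\theta_1)](\theta - \theta_2)/(\theta_2-\theta_1)$ for $\theta \ge \theta_2$, forcing $c(\theta) \to +\infty$. Letting $c_0 := \inf_\theta c(\theta) \in [-\infty, +\infty)$: if $c_0 = -\infty$ we get $\mathfrak C = \R$; if $c_0$ is finite and attained at some $\theta_0$, monotonicity forces $c \equiv c_0$ on $(-\infty, \theta_0]$ and then $c$ rises to $+\infty$, giving $\mathfrak C = [c_0,+\infty)$; if $c_0$ is finite but only a limit at $-\infty$, continuity gives $\mathfrak C = (c_0,+\infty)$. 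These are exactly the four listed cases.

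The only nontrivial step is the first one: it is precisely H5' (joint convexity of $H$ in $(p,u)$), rather than the weaker H5 (convexity in $u$ for fixed $p$), that allows convexity to survive the partial Legendre transform. Everything else reduces to the supremum-of-convex-functions principle and to the elementary structure of monotone convex functions on $\R$.
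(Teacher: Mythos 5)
Your proof is correct, but it takes a genuinely different route from the paper on the key step of convexity of $c$. The paper works entirely on the Hamiltonian/PDE side: given an a.e.\ subsolution $\omega_a$ of $H(x,d_x\omega_a,a)\leq c(a)$ and an a.e.\ subsolution $\omega_b$ of $H(x,d_x\omega_b,b)\leq c(b)$, the joint convexity of $H$ in $(p,u)$ from {\bf H5'} immediately gives
\[
H\bigl(x,\lambda d_x\omega_a+(1-\lambda)d_x\omega_b,\lambda a+(1-\lambda)b\bigr)\leq\lambda c(a)+(1-\lambda)c(b),\quad\text{a.e.\ }x\in M,
\]
so $\lambda\omega_a+(1-\lambda)\omega_b$ is a subsolution at level $\lambda a+(1-\lambda)b$ with constant $\lambda c(a)+(1-\lambda)c(b)$; convexity of $c$ then follows directly from the infimum definition \eqref{eq:e-const}. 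You instead pass to the Lagrangian side, using the partial-infimization argument to get concavity of $L(x,v,\cdot)$, and then invoke the Mañé variational formula $c(\theta)=\sup_{\mu\in\cC}\int_{TM}[-L(x,v,\theta)]\,d\mu$ to express $c$ as a supremum of convex functions of $\theta$. Both arguments are legitimate and standard dual approaches. The trade-off: the paper's version is more self-contained, resting only on \eqref{eq:e-const} and the equivalence of a.e.\ subsolutions with viscosity subsolutions; yours relies on the variational characterization of the ergodic constant, which is a nontrivial weak KAM fact (the paper does use it in the proof of Theorem \ref{thm:h4}, so it is available, but it is extra machinery here). Your classification of $\mathfrak C$ is essentially identical to the paper's, and you make explicit one point the paper leaves implicit, namely that a nonconstant convex function on $\R$ is unbounded above.
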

\proof
 Since {\bf H5'} is assumed, it suffices to show that for any $a<b$, there holds 
\[
c(\lb a+(1-\lb)b)\leq \lb c(a)+(1-\lb)c(b),\quad\forall\,\lb\in[0,1].
\] 
Indeed, due to \eqref{eq:e-const}, if there exists a subsolution $\om_a$ (resp. $\om_b$) of 
\[
H(x,d_x\om_a(x), a)\leq c(a)\quad {\rm (resp.\;\;} H(x,d_x\om_b(x), b)\leq c(b){\rm )}, 
\]
then for any $\lb\in[0,1]$
\ben
& &H(x,\lb d_x\om_a(x)+(1-\lb)d_x\om_b(x), \lb a+(1-\lb)b)\\
&\leq& \lb H(x,d_x\om_a(x), a) +(1-\lb)H(x,d_x\om_b(x), b)\\
&\leq&  \lb c(a)+(1-\lb) c(b),\quad \quad {\rm a.e.}\; x\in M.
\een
 Once again by \eqref{eq:e-const}, we conclude that $\lb c(a)+(1-\lb) c(b)\geq c(\lb a+(1-\lb) b)$.

Due to Proposition \ref{prop:mane}, $c(a)$ is continuous, non-decreasing and convex. Therefore, 
\[
\sup_{a\in\R}c(a)=+\infty\quad \text{ or }\quad\sup_{a\in\R}c(a) = \inf_{a\in\R}c(a) = c_0
\]
 for some $c_0 \in \R$. If $\inf\mathfrak C=-\infty$, then $\mathfrak C=(-\infty,+\infty)$. If $c_0:=\inf\mathfrak C$ is finite, then either $ c(a)>c_0$ for all $a\in\R$, or there exists some $a_0\in\R$, such that $c(a)=c_0$ for all $a\leq a_0$. For the former case,  $\mathfrak C$ can be established as $(c_0,+\infty)$. For the latter case, $\mathfrak C$ can be established as $\{c_0\}$ or $ [c_0,+\infty)$.\qed\\

\noindent{\it Proof of Corollary \ref{cor:h5-c}:}
Due to {\bf H5'}, \eqref{eq:+} and \eqref{eq:-} can be reinforced to 
%Recall that the single sided derivative of $c:\R\rightarrow\R$ always exists. Suppose $a<b$,   there holds 
%\ben
%c(b)-c(a)&=&\inf_{\mu\in\cC}\int_{TM}L(x,v,a)d\mu-\inf_{\mu\in\cC}\int_{TM}L(x,v,b)d\mu\\
%&=&\inf_{\mu\in\cC}\int_{TM}L(x,v,a)d\mu-\int_{TM}L(x,v,b)d\mu_b\\
%&\leq&\int_{TM}\big(L(x,v,a)-L(x,v,b)\big)d\mu_b
%\een
%for any $\mu_b\in\mathfrak M^b$. On the other side, 
%\ben
%c(b)-c(a)&=&\inf_{\mu\in\cC}\int_{TM}L(x,v,a)d\mu-\inf_{\mu\in\cC}\int_{TM}L(x,v,b)d\mu\\
%&=&\int_{TM}L(x,v,a)d\mu_a-\inf_{\mu\in\cC}\int_{TM}L(x,v,b)d\mu\\
%&\geq&\int_{TM}\big(L(x,v,a)-L(x,v,b)\big)d\mu_a
%\een
%for any $\mu_a\in\mathfrak M^a$. Combining these two inequalities and by {\bf H4} we get 
%\[
%\int_{TM}\frac{L(x,v,a)-L(x,v,b)}{b-a}d\mu_a\leq \frac{c(b)-c(a)}{b-a}\leq \int_{TM}\frac{L(x,v,a)-L(x,v,b)}{b-a}d\mu_b
%\]
\beq\label{eq:+'}
-\int_{TM}\partial_uL(x,v,a)d\mu_a\leq \frac{c(b)-c(a)}{b-a}\leq -\int_{TM}\partial_uL(x,v,b)d\mu_b,\quad (a<b)
\eeq
for any $\mu_a\in\mathfrak M^a$, $\mu_b\in\mathfrak M^b$ and 
\beq\label{eq:-'}
-\int_{TM}\partial_uL(x,v,b)d\mu_b\leq \frac{c(b)-c(a)}{b-a}\leq -\int_{TM}\partial_uL(x,v,a)d\mu_a,\quad(b<a)
\eeq
for any $\mu_a\in\mathfrak M^a$, $\mu_b\in\mathfrak M^b$. Recall that  the single sided derivative of $c:\R\rightarrow\R$ always exists due to Proposition \ref{prop:mane2}, so 
% By taking $b\rightarrow a_+$, we conclude 
%\beq\label{eq:+'}
%-\int_{TM}\partial_uL(x,v,a)d\mu_a\leq c'_+(a):=\lim_{b\rightarrow a_+}\frac{c(b)-c(a)}{b-a}\leq -\int_{TM}\partial_uL(x,v,a)d\sigma_a
%\eeq
%for any $\mu_a\in\mathfrak M^a$ and $\sigma_a\in\varlimsup_{b\rightarrow a_+}\mathfrak M^b$. Similarly, for $b<a$ we get 
%\[
%-\int_{TM}\partial_uL(x,v,b)d\mu_b\leq \frac{c(b)-c(a)}{b-a}\leq -\int_{TM}\partial_uL(x,v,a)d\mu_a
%\]
%for any $\mu_a\in\mathfrak M^a$ and $\mu_b\in\mathfrak M^b$. Furthermore, by taking $b\rightarrow a_-$, we conclude 
%\beq\label{eq:-}
%-\int_{TM}\partial_uL(x,v,a)d\sigma_a\leq c'_-(a):=\lim_{b\rightarrow a_-}\frac{c(b)-c(a)}{b-a}\leq -\int_{TM}\partial_uL(x,v,a)d\mu_a
%\eeq
% $\mu_a\in\mathfrak M^a$ and $\sigma_a\in\varlimsup_{b\rightarrow a_-}\mathfrak M^b$. 
 if $c_0:=\inf\mathfrak C$ is finite and $\theta\in c^{-1}(c_0)$, then $c'_-(\theta)=0$. Therefore, $\mathfrak M_-^\theta\neq\emptyset$, item 1) holds. If $c(\theta)>c_0$, then $0<c'_-(\theta)\leq c'_+(\theta)$ and $\mathfrak M_-^\theta=\emptyset$. So item 2) holds. 

If $\mathfrak M_-^\theta\neq\emptyset$, then $c'_-(\theta)\leq 0$. Since $c:\R\rightarrow\R$ is non-decreasing, that implies $c'_-(\theta)=0$ and then $c(\theta')=c(\theta)$ for any $\theta'<\theta$. That further leads to $c(\theta)=\inf\mathfrak C$, i.e. item 3) is true.

If $\mathfrak M_-^\theta=\emptyset$, then due to \eqref{eq:+} $c'_+(\theta)>0$. %Therefore, $c(\theta')>c(\theta)$ for any $\theta'>\theta$. 
If $c(\theta)=\inf\mathfrak C$, then $c'_-(\theta)=0$. That urges 
\[
-\int_{TM}\partial_uL(x,v,a)d\sigma_\theta\leq 0
\]
for any $\sigma_\theta\in \varlimsup_{\theta'\rightarrow \theta_-}\mathfrak M^{\theta'}$. However, $\varlimsup_{\theta'\rightarrow \theta_-}\mathfrak M^{\theta'}\subset\mathfrak M^\theta$ and $\mathfrak M_-^\theta=\emptyset$, so due to the compactness of $\wt\cM^\theta\subset TM\times\R$, there exists a constant $\dt>0$ such that 
 \[
0<\dt\leq -\int_{TM}\partial_uL(x,v,a)d\sigma_\theta\leq 0
\]
for any $\sigma_\theta\in\varlimsup_{\theta'\rightarrow \theta_-}\mathfrak M^{\theta'}$, which is a contradiction. So item 4) holds. 
\qed

\vspace{40pt}

%\appendix

\end{document}